\def\eqref#1{equation~\ref{#1}}
\def\1{\bm{1}}
\DeclareMathAlphabet{\mathsfit}{\encodingdefault}{\sfdefault}{m}{sl}
\SetMathAlphabet{\mathsfit}{bold}{\encodingdefault}{\sfdefault}{bx}{n}
\numberwithin{equation}{section}
\newtheorem{thm}{Theorem}
\newtheorem{rmk}{Remark}
\renewcommand{\eqref}[1]{\ref{#1}}
\newcommand{\tensor}[1]{\bm{\mathcal{#1}}}
\title{A Novel Block-Alternating Iterative Algorithm for Retrieving Top-$k$ Elements from Factorized Tensors}
\author{Chuanfu~Xiao,
Jiaxin~Zeng* \thanks{ Use footnote for providing further information
about author (webpage, alternative address)---\emph{not} for acknowledging
funding agencies.  Funding acknowledgements go at the end of the paper.} \\
Department of Computer Science\\
Cranberry-Lemon University\\
Pittsburgh, PA 15213, USA \\
\texttt{\{hippo,brain,jen\}@cs.cranberry-lemon.edu} \\
\And
Ji Q. Ren \& Yevgeny LeNet \\
Department of Computational Neuroscience \\
University of the Witwatersrand \\
Joburg, South Africa \\
\texttt{\{robot,net\}@wits.ac.za} \\
\AND
Coauthor \\
Affiliation \\
Address \\
\texttt{email}
}
\begin{document}

\maketitle

\begin{abstract}
Tensors, especially higher-order tensors, are typically represented in low-rank formats to preserve the main information of the high-dimensional data while saving memory space. In practice, only a small fraction elements in high-dimensional data are of interest, such as the $k$ largest or smallest elements. Thus, retrieving the $k$ largest/smallest elements from a low-rank tensor is a fundamental and important task in a wide variety of applications. In this paper, we first model the top-$k$ elements retrieval problem to a continuous constrained optimization problem. To address the equivalent optimization problem, we develop a block-alternating iterative algorithm that decomposes the original problem into a sequence of small-scale subproblems. Leveraging the separable summation structure of the objective function, a heuristic algorithm is proposed to solve these subproblems in an alternating manner. Numerical experiments with tensors from synthetic and real-world applications demonstrate that the proposed algorithm outperforms existing methods in terms of accuracy and stability.
\end{abstract}

\section{Introduction}

Large volumes of high-dimensional data, such as simulation data, hyperspectral images, video, have sprung up in scientific computing, remote sensing, computer vision, and many other applications. Typically, these high-dimensional data can be naturally represented by tensors, and are stored in a lossy compression manner via tensor decomposition representations, such as CANDECOMP/PARAFAC (CP) \cite{hitchcock1927expression}, Tucker \cite{tucker1966some}, and tensor train (TT) \cite{oseledets2011tensor} to mitigate the curse of dimensionality \cite{kolda2009tensor,cong2015tensor,sidiropoulos2017tensor,cichocki2016tensor,cichocki2017tensor,zhu2024aptt}. In practical scenarios, many applications rely solely on a subset of high-dimensional data, corresponding to a few elements of a tensor. For example, in recommendation systems, {the $k$ largest elements correspond to the most meaningful concerns for personalized recommendations}~\cite{symeonidis2016matrix,frolov2017tensor,zhang2021dynamic}. 
In quantum simulations, quantum states are usually compressed in tensor decomposition representations to reduce memory usage. Meanwhile, the measurement of quantum states can be simulated using the maximum likelihood estimation, which is equivalent to retrieving the largest magnitude element in a factorized tensor \cite{zhou2020limits,yuan2021quantum,ma2022low}. Moreover, numerous practical tasks can also be modeled as the top-$k$ elements retrieval problem~\cite{sozykin2022ttopt,sidiropoulos2023minimizing}.

In recent years, there is a series of works have been conducted on the top-$k$ elements retrieval problem from a factorized tensor (i.e., high-dimensional data represented by tensor decompositions) \cite{sozykin2022ttopt,batsheva2023protes,sidiropoulos2023minimizing,shetty2024tensor,zurek2025tensor,sozykin2025high}. For tensors in CP format, a method called star sampling~\cite{lu2017sampling} was proposed to find the $k$ largest elements, which is a generalization of the diamond sampling method proposed in \cite{ballard2015diamond} for the matrix case. Sampling methods applicable to other tensor decomposition formats have also been developed~\cite{sozykin2022ttopt,chertkov2023tensor,ryzhakov2024black}.
However, the accuracy of sampling methods is highly sensitive to sample size and quality, which in turn depend on the underlying structure of factorized tensors. This dependency will lead to unstable performance. Moreover, such methods are limited to identifying the $k>1$ largest elements and cannot be directly generalized to retrieving the $k$ smallest elements or others. Another more efficient approach is to formulate the largest/smallest element retrieval problem in a factorized tensor as a continuous optimization problem. As demonstrated in~\cite{espig2013efficient,espig2020iterative}, this problem can be transformed into a symmetric eigenvalue problem, where the symmetric matrix is a diagonal matrix consisting of all entries of the factorized tensor. Consequently, the classical power iteration (inverse iteration) method can be employed to find the largest (smallest) element~\cite{espig2013efficient,espig2020iterative,soley2021iterative}. It is worth noting that the Hadamard product (Hadamard inverse) operation involved in power iteration (inverse iteration) method leads to rapid growth in tensor rank. To address this issue, a recompression operation needs to be introduced \cite{oseledets2011tensor,al2022parallel,al2023randomized,sun2024hatt}. Not only that, the accumulated error from recompression during the iterative process of the power iteration (inverse iteration) method 
may fail in locating the location corresponding to the largest/smallest element\footnote{To complement this approach, we design a post-processing strategy that identifies the location of the largest/smallest elements by computing the rank-one approximation of the tensor corresponding to the eigenvector \cite{regalia2000higher,de2000best,zhang2001rank,xiao2024hoscf}.}. 
In~\cite{sidiropoulos2017tensor}, the authors modeled the top-$1$ retrieval problem as an equivalent continuous constraint optimization problem and proposed the projected gradient descent (PGD) method to solve it~\cite{sidiropoulos2023minimizing}. Unfortunately, the PGD method exhibits significant sensitivity to hyperparameter selection (e.g., the step size), resulting in unstable performance across different tasks. Notably, existing algorithms derived from the continuous optimization perspective cannot be directly applied to the case of $k>1$.

For the top-$k$ elements retrieval problem in a factorized tensor, we follow the symmetric eigenvalue model given in \cite{espig2013efficient,espig2020iterative}. Within this model, we observe that the tensor corresponding to the eigenvector has a rank-one structure. Based on this, we then present an equivalent continuous constrained optimization reformulation in this paper. To solve the continuous constrained optimization problem, we design a novel block-alternating iterative algorithm. Specifically, this algorithm transforms the original large-scale problem into a series of solvable small-scale subproblems, and a heuristic method is further developed to efficiently solve these subproblems based on the separable summation form of the objective function. 
Numerical experiments with factorized tensors from synthetic and real-world applications demonstrate that the proposed block-iterative algorithm enjoys the advantages in (1) \emph{versatility}: it can retrieve the~$k$ largest/smallest elements and their locations; and (2) \emph{stability}: it achieves a significant improvement in accuracy on factorized tensors with different distributions.

Our main contributions consist of three parts, summarized as follows:
\begin{itemize}
\item \emph{Model}: Inspired by~\cite{espig2013efficient,espig2020iterative}, we model the top-$k$ elements retrieval problem as a continuous constrained optimization problem based on the rank-one structure of the tensor corresponding to the eigenvector.

\item \emph{Algorithm}: We propose a novel block-alternating iterative algorithm to solve the equivalent continuous constrained optimization problem. 

\item \emph{Application}: We apply the proposed block-alternating iterative algorithm to the measurement phase in quantum circuit simulations. Numerical results illustrate that the proposed algorithm outperforms existing algorithms in terms of accuracy.
\end{itemize}

The rest of the paper is organized as follows. Section \ref{sec:prob} introduces the top-$k$ elements retrieval problem and presents its equivalent continuous constrained optimization reformulation. Section \ref{sec:dmrg} shows the framework of the block-alternating iterative algorithm for the continuous constrained optimization problem. Numerical results on synthetic and real-world tensors are reported in Section \ref{sec:numexp}. The conclusion of this paper is in Section \ref{sec:con}.

\textbf{Notations:} In this paper, we use boldface lowercase and capital letters (e.g., $\bm{a}$ and $\bm{A}$) to denote vectors and matrices. The boldface Euler script letter is used to denote higher-order tensors, e.g., a $d$th-order tensor can be expressed as $\bm{\mathcal{A}} \in\mathbb{R}^{n_{1}\times n_{2}\times\cdots\times n_{d}}$, where $n_p$ denotes the {dimension} of mode-$p$, and the $(i_{1},i_{2},\ldots,i_{d})$th element of it is represented by $\bm{\mathcal{A}}\left({i_{1},i_{2},\ldots,i_{d}}\right)$.

\section{Problem Formulation}\label{sec:prob}

In the paper, we only consider CP tensors. Simultaneously, the proposed model and algorithm can be naturally applied to other tensor decomposition formats, including but not limited to Tucker and TT tensors. Let $\bm{\mathcal{A}}\in\mathbb{R}^{n_{1}\times n_{2}\times\cdots\times n_{d}}$ be a $d$th-order tensor. Its CP format represents it as a sum of $R$ rank-one tensors, that is,
\setlength\abovedisplayskip{0.2cm}
\setlength\belowdisplayskip{0.2cm}
\begin{equation}\label{eq:cp}    \tensor{A}\mathrel{:=}\sum\limits_{r=1}^{R}\bm{U}_1(:,r)\circ\bm{U}_2(:,r)\circ\cdots\circ\bm{U}_{d}(:,r),
\end{equation}where ``$\circ$'' denotes the tensor outer product, $\left\{\bm{U}_p\in\mathbb{R}^{n_p\times R}:p=1,2\ldots,d\right\}$ are CP factors, and $R$ denotes the CP rank of $\tensor{A}$ if Eq. \eqref{eq:cp} is the sum of the smallest number of rank-one tensors \cite{hitchcock1927expression,kruskal1977three}. Specifically, a $d$th-order tensor $\tensor{A}$ in Eq. \eqref{eq:cp} is termed a CP tensor. For clarity, Fig.~\ref{fig:cp} depicts the diagram of a third-order CP tensor.
\begin{figure}[t]
\centering
\vspace{0.1cm}
\setlength{\abovecaptionskip}{0.1cm}  
\setlength{\belowcaptionskip}{0.1cm}  
\includegraphics[width=0.78\textwidth]{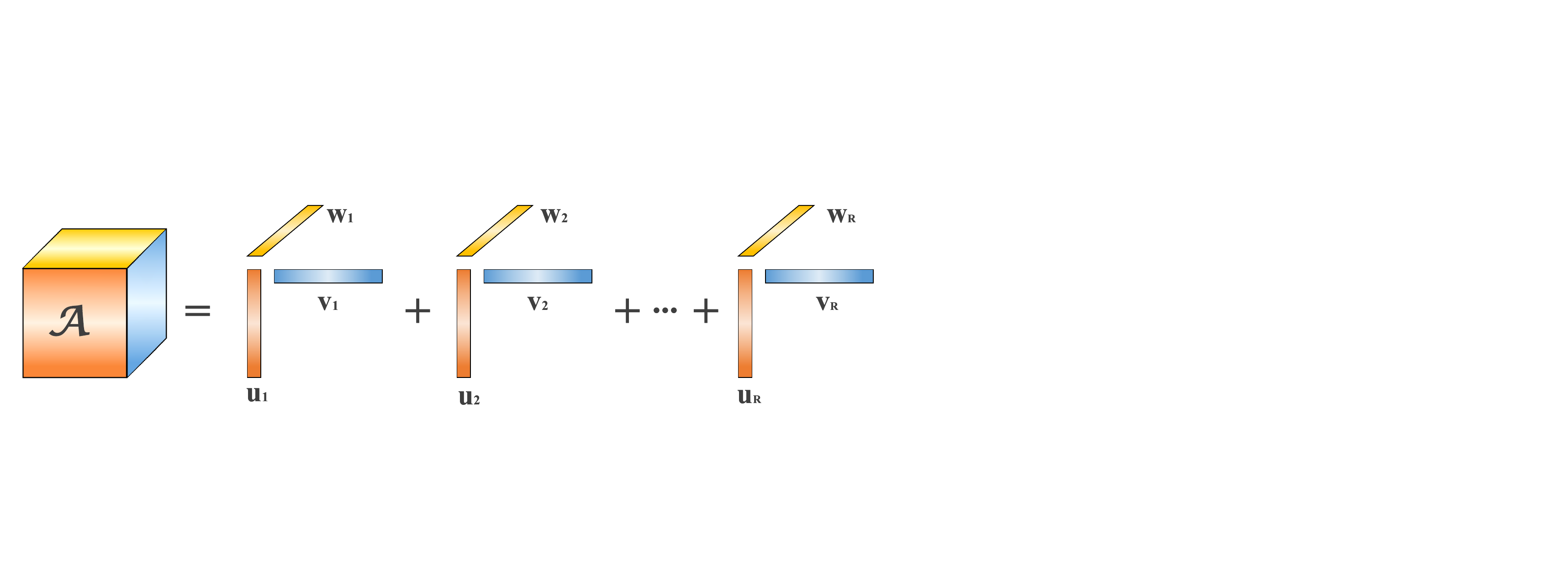}
\centering
\caption{The diagram of a third-order CP tensor.}
\label{fig:cp}
\end{figure}

The goal of this paper is to retrieve the top-$k$ elements of the CP tensor $\tensor{A}$, e.g., the $k$ largest or smallest elements and their corresponding locations. 
Without loss of generality, we take the $k$ largest elements retrieval problem as an example. Meanwhile, a similar result can be derived for retrieving the $k$ smallest elements. 
From \cite{espig2013efficient,espig2020iterative}, we know that retrieving the $k$ largest elements of the CP tensor $\tensor{A}$ can be modeled as a symmetric eigenvalue problem. More specifically, {the top-$k$ retrieval problem} is equivalent to solving the $k$ largest eigenpairs of a diagonal matrix $\bm{A}\in\mathbb{R}^{\prod\limits_{p=1}^dn_p\times\prod\limits_{p=1}^dn_p}$ that consists of all entries of $\tensor{A}$, i.e., 
\setlength\abovedisplayskip{0.2cm}
\setlength\belowdisplayskip{0.2cm}
\begin{equation}\label{eq:diagonal}
    \bm{A}(i,i) = \tensor{A}(i_1,i_2,\ldots,i_d)\quad \text{with}\quad i=i_1+\sum\limits_{q=1}^{d-1}(i_{q+1}-1)n_{1:q},
\end{equation}
where $n_{1:q}=\prod\limits_{p=1}^qn_p$. Further, the symmetric eigenvalue problem can be formulated as the following orthogonal-constrained optimization problem, which is
\begin{equation}\label{prob:symeigen}
    \max\limits_{\bm{X}\in\mathbb{R}^{\prod\limits_{p=1}^dn_p\times k}}\texttt{tr}\left(\bm{X}^{T}\bm{A}\bm{X}\right)\quad \text{s.t.}\quad \bm{X}^{T}\bm{X} = \bm{I}_{k},
\end{equation}
where $\texttt{tr}(\cdot)$ represents the matrix trace operator, and $\bm{I}_k$ is the $k\times k$ identity matrix. 

For convenience, we denote the Hadamard product and inner product of tensors as ``$*$'' and $\langle\cdot,\cdot\rangle$, respectively. In tensor representations, the multiplication of the diagonal matrix $\bm{A}$ and vector $\bm{X}(:,j)$ can be rewritten as the Hadamard product of $\tensor{A}$ and $\tensor{X}_j$, i.e., $\tensor{A}*\tensor{X}_j$, and the inner product of vectors $\bm{X}(:,j)$ and $\bm{A}\bm{X}(:,j)$ can also be rewritten as the inner product of tensors $\tensor{X}_j$ and $\tensor{A}*\tensor{X}_j$, i.e., $\langle\tensor{X}_j,\tensor{A}*\tensor{X}_j\rangle$, where $\tensor{X}_j\in\mathbb{R}^{n_1\times n_2\times\cdots\times n_d}$ is tensorized by $\bm{X}(:,j)$.
Thus, the optimization problem \eqref{prob:symeigen} can be reformulated as
\begin{equation}\label{prob:symeigen-t}
\begin{array}{l}
\max\limits_{\tensor{X}_j\in\mathbb{R}^{n_1\times n_2\times\cdots\times n_d}}\sum\limits_{j=1}^k\langle\tensor{X}_j,\tensor{A}*\tensor{X}_j\rangle, \\
      \text{s.t.}\quad\langle\tensor{X}_{i},\tensor{X}_j\rangle=\left\{\begin{array}{cc}
          1, & i=j,\\
          0, & i\neq j.
    \end{array}\right.
\end{array}
\end{equation}

\begin{figure*}[t]
\centering
\vspace{0.1cm}
\setlength{\abovecaptionskip}{0.1cm}  
\setlength{\belowcaptionskip}{0.1cm}  
\includegraphics[width=0.98\textwidth]{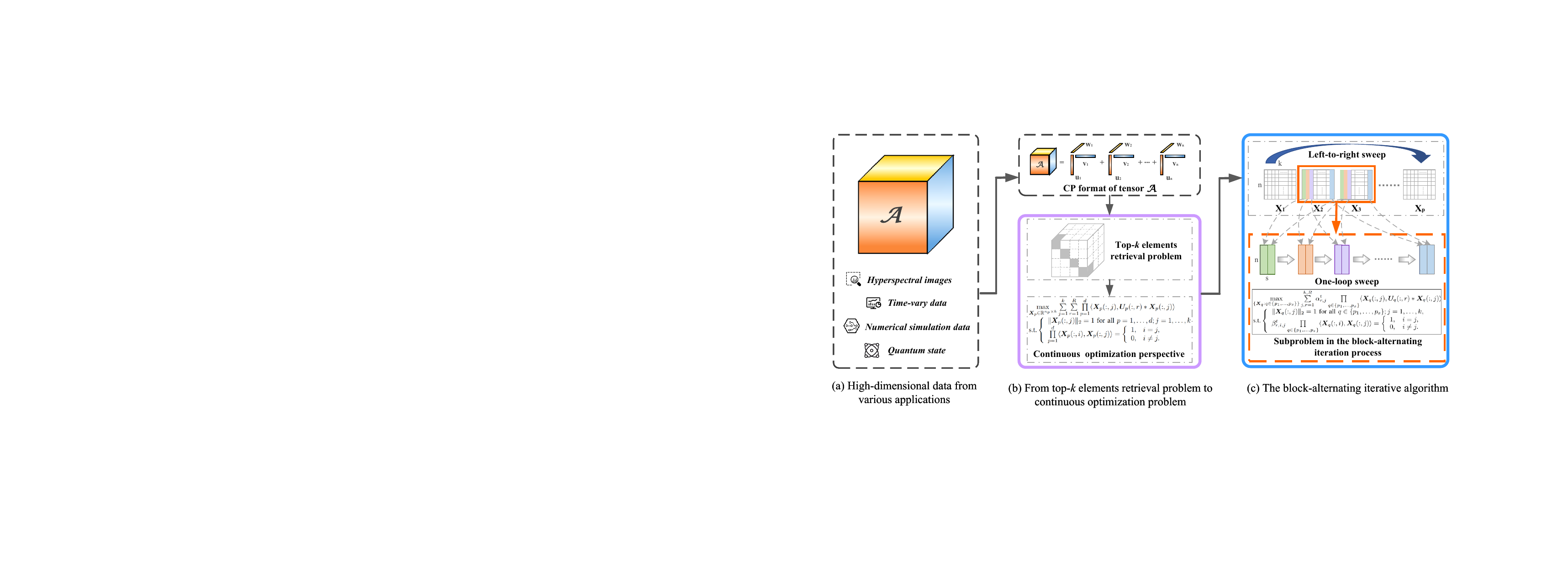}
\centering
\caption{The workflow of the block-alternating iterative algorithm for the top-$k$ elements retrieval problem in factorized tensors.}
\label{fig:framework}
\end{figure*}

Since the matrix $\bm{A}$ is diagonal, its eigenvector corresponding to the $i$th eigenvalue $\bm{A}(i,i)$ is $\bm{e}_i\in\mathbb{R}^{\prod\limits_{p=1}^dn_p}$, which is the $i$th column of $\prod\limits_{p=1}^dn_p\times\prod\limits_{p=1}^dn_p$ identity matrix. We note that the $d$th-order tensor corresponding to the eigenvector $\bm{e}_i$ can be decomposed into a rank-one form represented as $\bm{e}_{i_1}\circ\bm{e}_{i_2}\circ\cdots\circ\bm{e}_{i_d}$, where $i$ is the same as Eq.~\eqref{eq:diagonal}. $\bm{e}_{i_p}\in\mathbb{R}^{n_p}$ is the $i_p$th column of $n_p\times n_p$ identity matrix for all $p=1,2,\ldots,d$. In other words, if $\{\tensor{X}_j^*:j=1,\ldots,k\}$ are the solution of the optimization problem \eqref{prob:symeigen-t}, $\tensor{X}_j^*$ has a rank-one structure for all $j=1,\ldots,k$.
Based on this insight, the optimization problem~\eqref{prob:symeigen-t} can be further simplified via the CP representation of $\tensor{A}$, as detailed in Theorem \ref{thm:opt}.

\begin{thm}\label{thm:opt}
    {Let $\tensor{A}$ be a $d$th-order CP tensor with its CP representation given by Eq.~\eqref{eq:cp}. Subsequently, retrieving the $k$ largest elements of $\bm{\mathcal{A}}$ is equivalent to solving the following continuous constrained optimization problem}:
    \begin{equation}\label{prob:symeigen-r1}
    \begin{array}{l}
          \max\limits_{\bm{X}_{p}\in\mathbb{R}^{n_p\times k}} \sum\limits_{j=1}^k\sum\limits_{r=1}^R \prod\limits_{p=1}^d\langle\bm{X}_{p}(:,j),\bm{U}_p(:,r)*\bm{X}_{p}(:,j)\rangle, \\
        \text{s.t.}\left\{\begin{array}{l}
             \|\bm{X}_{p}(:,j)\|_{2} = 1\quad \text{for all}\quad p=1,\ldots,d; j=1,\ldots,k,  \\
             \prod\limits_{p=1}^d\langle\bm{X}_{p}(:,i),\bm{X}_{p}(:,j)\rangle=\left\{\begin{array}{cc}
        1, & i=j, \\
        0, & i\neq j.
    \end{array}\right.
        \end{array}\right.
    \end{array}
\end{equation}
\end{thm}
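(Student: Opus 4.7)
The plan is to show that Eq. \eqref{prob:symeigen-r1} is precisely what Eq. \eqref{prob:symeigen-t} becomes when each $\tensor{X}_j$ is restricted to its rank-one parametrization $\tensor{X}_j = \bm{X}_1(:,j)\circ\cdots\circ\bm{X}_d(:,j)$, and that this restriction is made without loss of optimum. The first step uses the observation made just before the theorem: because $\bm{A}$ in Eq. \eqref{eq:diagonal} is diagonal in the standard basis, its $i$th eigenvector is the standard basis vector $\bm{e}_i$, whose tensorization is the rank-one tensor $\bm{e}_{i_1}\circ\cdots\circ\bm{e}_{i_d}$. By the standard trace-maximization principle applied to Eq. \eqref{prob:symeigen}, a maximizer of Eq. \eqref{prob:symeigen-t} can therefore be chosen with each $\tensor{X}_j^\star$ rank-one, supported on the location of the $j$th largest entry of $\tensor{A}$; in particular, restricting the feasible set to rank-one tensors does not decrease the optimum.

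Next I would substitute the rank-one form into the objective and unfold it using two standard identities for rank-one tensors,
\begin{equation*}
(\vu_1\circ\cdots\circ\vu_d)\ast(\vv_1\circ\cdots\circ\vv_d) = (\vu_1\ast\vv_1)\circ\cdots\circ(\vu_d\ast\vv_d),\qquad \langle\vu_1\circ\cdots\circ\vu_d,\,\vv_1\circ\cdots\circ\vv_d\rangle = \prod_{p=1}^d\langle\vu_p,\vv_p\rangle,
\end{equation*}
combined with the CP expansion of $\tensor{A}$ from Eq. \eqref{eq:cp} and bilinearity of the inner product. A direct computation collapses $\sum_j\langle\tensor{X}_j,\tensor{A}\ast\tensor{X}_j\rangle$ to $\sum_j\sum_r\prod_p\langle \bm{X}_p(:,j),\bm{U}_p(:,r)\ast\bm{X}_p(:,j)\rangle$, matching the objective of Eq. \eqref{prob:symeigen-r1}. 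The orthonormality constraint $\langle\tensor{X}_i,\tensor{X}_j\rangle=\delta_{ij}$ becomes $\prod_p\langle \bm{X}_p(:,i),\bm{X}_p(:,j)\rangle=\delta_{ij}$, while the per-factor normalization $\|\bm{X}_p(:,j)\|_2=1$ is a canonical choice enabled by the scaling ambiguity of rank-one tensors: rescaling $\bm{X}_p(:,j)\mapsto\alpha\bm{X}_p(:,j)$ and $\bm{X}_{p'}(:,j)\mapsto\alpha^{-1}\bm{X}_{p'}(:,j)$ leaves $\tensor{X}_j$ invariant, so a unit-norm representative can always be selected factor by factor.

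To close the equivalence I would observe that, conversely, any feasible tuple of Eq. \eqref{prob:symeigen-r1} builds an orthonormal family of rank-one tensors that is feasible for Eq. \eqref{prob:symeigen-t} with the same objective value, so the optimum of Eq. \eqref{prob:symeigen-r1} is bounded above by the top-$k$ sum; combined with the first step, the two problems share the same optimum and the locations of the top-$k$ entries of $\tensor{A}$ are read off from the supports of the optimal factors. The main obstacle I anticipate is the rank-one optimality argument in the first step. The cleanest case is when the top-$k$ entries of $\tensor{A}$ are distinct, so that each of the top-$k$ eigenspaces of $\bm{A}$ is one-dimensional and the standard-basis choice is forced up to sign. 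When ties are present the top-$k$ eigenspace becomes higher-dimensional and one must explicitly pick an orthonormal basis of it consisting of standard basis vectors; this is always possible because such a subspace is spanned by a subset of the $\bm{e}_i$'s, but it needs to be stated carefully so that the reduction to rank-one tensors is rigorous. Once this subtlety is handled, the remainder of the proof is algebraic bookkeeping based on the rank-one identities above.
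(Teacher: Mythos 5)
Your proposal follows essentially the same route as the paper: observe that the diagonal matrix $\bm{A}$ has standard basis vectors as eigenvectors, whose tensorizations are rank-one, restrict problem \eqref{prob:symeigen-t} to rank-one tensors, and expand the objective and constraints using the CP form of $\tensor{A}$ together with the rank-one Hadamard-product and inner-product identities. You are in fact more careful than the paper on three points it leaves implicit --- the trace-maximization (Ky Fan) argument that the rank-one restriction loses no optimum even when top-$k$ entries tie, the scaling-ambiguity justification of the per-factor unit-norm constraints $\|\bm{X}_p(:,j)\|_2=1$, and the converse direction showing feasible tuples of \eqref{prob:symeigen-r1} lift to feasible points of \eqref{prob:symeigen-t} --- so the proposal is correct and, if anything, tightens the paper's argument.
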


\begin{proof}
    Since the symmetric matrix in the symmetric eigenvalue problem corresponding to the top-$k$ retrieval problem is a diagonal matrix, its $j$th eigenvector is a vector whose $j$th element is $1$ and other elements are $0$. Clearly, the $d$th-order tensor corresponding to the eigenvector has a rank-one structure. In other words, the solution to the optimization problem \eqref{prob:symeigen-t}, $\{\tensor{X}_j:j=1,\ldots,k\}$ satisfies that $\tensor{X}_j$ is a $d$th-order rank-one tensor for all $j=1,\ldots,k$. Therefore, the optimization problem \eqref{prob:symeigen-t} can be reformulated as 
    \begin{equation}\label{eq:reformulation}
    \begin{array}{l}
          \max\limits_{\tensor{X}_j\in\mathbb{R}^{n_1\times n_2\times\cdots\times n_d}}\sum\limits_{j=1}^k\langle\tensor{X}_j,\tensor{A}*\tensor{X}_j\rangle, \\
        \text{s.t.} \left\{\begin{array}{l}
        \tensor{X}_j=\bm{x}_{j,1}\circ\bm{x}_{j,2}\circ\cdots\circ\bm{x}_{j,d}\quad \text{for all}\quad j=1,\ldots,k, \\
              \prod\limits_{p=1}^d{\langle\bm{x}_{i,p},\bm{x}_{j,p}\rangle}=\left\{\begin{array}{cc}
            1, & i=j, \\
            0, & i\neq j.
        \end{array}\right.
        \end{array}\right. 
    \end{array}
    \end{equation}
    By the CP representation of $\tensor{A}$, i.e., 
    $\tensor{A} = \sum\limits_{r=1}^R\bm{U}_1(:,r)\circ\bm{U}_2(:,r)\circ\cdots\circ\bm{U}_d(:,r).$ Based on the rank-one structure of $\tensor{X}_j$, we can rewrite $\tensor{A}*\tensor{X}_j$ as 
    $\sum\limits_{r=1}^R\prod\limits_{p=1}^d\langle\bm{x}_{j,p},\bm{U}_p(:,r)*\bm{x}_{j,p}\rangle$ (i.e., the Hadamard product on CP tensors). Furthermore, the objective function of the optimization problem \eqref{eq:reformulation} is formulated as follows:
    \setlength\abovedisplayskip{0.2cm}
    \setlength\belowdisplayskip{0.2cm}
    \begin{equation}
        \sum\limits_{j=1}^k\sum\limits_{r=1}^R \prod\limits_{p=1}^d\langle\bm{x}_{j,p},\bm{U}_p(:,r)*\bm{x}_{j,p}\rangle.
    \end{equation}
    Thus, by denoting the matrix $[\bm{x}_{1,p},\ldots,\bm{x}_{k,p}]$ as $\bm{X}_p$, the optimization problem \eqref{eq:reformulation} is equivalent to problem \eqref{prob:symeigen-r1}. 
\end{proof} 

Theorem \ref{thm:opt} illustrates that retrieving the $k$ largest element of the CP tensor $\tensor{A}$ can be reformulated as a continuous constrained optimization problem. It is worth mentioning that the scale of this problem is $\sum\limits_{p=1}^dn_pk$, and the calculation of the objective function in the optimization problem \eqref{prob:symeigen-r1} only involves the Hadamard product of vectors of size $n_{p}$ for all $p=1,2,\ldots,d$. Therefore, it opens the door to developing algorithms for the top-$k$ element retrieval problem from a continuous optimization perspective.
{Notably, the case of retrieving the $k$ smallest elements is addressed by replacing the maximization in the optimization problem \eqref{prob:symeigen-r1} with minimization.}

\begin{rmk}\label{rmk:top-1}
    {For the case of $k=1$, the optimization problem~\eqref{prob:symeigen-r1} can degenerate into the following form:}
    \begin{equation}\label{prob:top1}
    \begin{array}{l}
           \max\limits_{\bm{x}_{p}\in\mathbb{R}^{n_p}} \sum\limits_{r=1}^R \prod\limits_{p=1}^d\langle\bm{x}_{p},\bm{U}_p(:,r)*\bm{x}_{p}\rangle, \\
          \text{s.t.}\  \|\bm{x}_{p}\|_{2} = 1\quad \text{for all}\quad p=1,2,\ldots,d.
    \end{array}
\end{equation}
We remark that the continuous optimization model in \cite{sidiropoulos2023minimizing} can be derived directly from Eq.~\eqref{prob:top1} by rewriting its objective function as
$\sum\limits_{r=1}^R \prod\limits_{p=1}^d\sum\limits_{l=1}^{n_p}\bm{U}_p(l,r)\bm{y}_{p}(l)$, with $\bm{y}_p=\bm{x}_p*\bm{x}_p$ subject to $\sum\limits_{l=1}^{n_p}\bm{y}_p(l)=1$ for all $p=1,2,\ldots,d$.
\end{rmk}

\section{algorithm}\label{sec:dmrg}

In this section, we first introduce the block-alternating iterative algorithm, and then describe the design ideas of the heuristic method for the subproblems in the iterative process. The complexity analysis of the block-alternating iterative algorithm is discussed in Subsection~\ref{sec:cost}.

\subsection{The block-alternating iterative algorithm}

Due to its nonlinearity and constraints, the optimization problem \eqref{prob:symeigen-r1} is difficult to be solve for all variables $\{\bm{X}_p\in\mathbb{R}^{n_p\times k}:p=1,2,\ldots,d\}$ simultaneously. Inspired by the idea of nonlinear Gauss-Seidel iteration \cite{vrahatis2003linear}, we propose a block-alternating iterative algorithm to address it. Fig.~\ref{fig:framework} displays the workflow of the proposed algorithm.
In this algorithm, only a few target variables are updated at a time by splitting the original large-scale problem into a series of small-scale subproblems. More specifically, all target variables are fixed except the selected $s$ target variables $\left\{\bm{X}_{p_1},\ldots,\bm{X}_{p_s}\right\}$ that are updated in each iteration. Therefore, the key of the block-alternating iterative algorithm is to solve the corresponding small-scale subproblem at each iteration. Theorem~\ref{thm:subprob} presents the specific form of the subproblem.

\begin{thm}\label{thm:subprob}
    Let $\left\{\bm{X}_p^t:p=1,2,\ldots,d\right\}_{t\geq0}$ be the value of the $t$th step of the block-alternating iterative algorithm. Its corresponding subproblem in the iterative process is as follows 

\begin{equation}\label{prob:subpro}
\begin{small}
\begin{array}{l}
      \max\limits_{\{\bm{X}_{q}:q\in\{p_1,\ldots,p_s\}\}}\sum\limits_{j,r=1}^{k,R}\alpha_{r,j}^t \prod\limits_{q\in\{p_1,\ldots,p_s\}}\langle\bm{X}_{q}(:,j),  \bm{U}_{q}(:,r)*\bm{X}_{q}(:,j)\rangle, \\
    \text{s.t.}\left\{\begin{array}{l}
          \|\bm{X}_{q}(:,j)\|_{2} = 1\quad \text{for all}\quad q\in\{p_1,\ldots,p_s\}; j=1,\ldots,k, \\
           \beta_{r,i,j}^t\prod\limits_{q\in\{p_1,\ldots,p_s\}}\langle\bm{X}_{q}(:,i),\bm{X}_{q}(:,j)\rangle=\left\{\begin{array}{cc}
        1, & i=j, \\
        0, & i\neq j.
    \end{array}\right.
    \end{array}\right.
\end{array}
\end{small}
\end{equation}where 
$$\alpha_{r,j}^t = \prod\limits_{q\notin\{p_1,\ldots,p_s\}}\langle\bm{X}^t_{q}(:,j),\bm{U}_{q}(:,r)*\bm{X}^t_{q}(:,j)\rangle,$$ 
and 
$$\beta_{r,i,j}^t=\prod\limits_{q\notin\{p_1,\ldots,p_s\}}\langle\bm{X}_{q}^t(:,i),\bm{X}_{q}^t(:,j)\rangle.$$ 

\begin{proof}
    When updating variables $\{\bm{X}_{q}:q\in\{p_1,\ldots,p_s\}\}$ in the $(t+1)$th iteration step, other variables are held fixed. Under these conditions, the two formulas
    
    \begin{equation*}
        \prod\limits_{p=1}^d\langle\bm{X}_p(:,j),\bm{U}_{p}(:,r)*\bm{X}_{p}(:,j)\rangle\quad \text{and}\quad \prod\limits_{p=1}^d\langle\bm{X}_p(:,j),\bm{X}_{p}(:,j)\rangle
    \end{equation*}
    can be decomposed as follows:
    \begin{equation}
        \prod\limits_{q'\notin\{p_1,\ldots,p_s\}}\langle\bm{X}_{q'}(:,j),\bm{U}_{q'}(:,r)*\bm{X}_{q'}(:,j)\rangle\prod\limits_{q\in\{p_1,\ldots,p_s\}}\langle\bm{X}_{q}(:,j),\bm{U}_{q}(:,r)*\bm{X}_{q}(:,j)\rangle,
    \end{equation}
    and
    \begin{equation}
        \prod\limits_{q'\notin\{p_1,\ldots,p_s\}}\langle\bm{X}_{q'}(:,j),\bm{X}_{q'}(:,j)\rangle\prod\limits_{q\in\{p_1,\ldots,p_s\}}\langle\bm{X}_{q}(:,j),\bm{X}_{q}(:,j)\rangle.
    \end{equation}
    By denoting $\prod\limits_{q'\notin\{p_1,\ldots,p_s\}}\langle\bm{X}^t_{q'}(:,j),\bm{U}_{q'}(:,r)*\bm{X}^t_{q'}(:,j)\rangle$ and $\prod\limits_{q'\notin\{p_1,\ldots,p_s\}}\langle\bm{X}_{q'}^t(:,i),\bm{X}_{q'}^t(:,j)\rangle$ as $\alpha_{r,j}^t$ and $\beta_{r,i,j}^t$, we can obtain the form of the subproblem \eqref{prob:subpro}. 
\end{proof}

\begin{algorithm}[t]
\caption{The block-alternating iterative algorithm.}\label{algo:dmrg}
\begin{algorithmic}[1]
\REQUIRE Tensor given in the CP format $\bm{\mathcal{A}} = \sum\limits_{r=1}^R \bm{U}_1({:,r})\circ\bm{U}_2({:,r})\circ\cdots\circ \bm{U}_d({:,r})$, $k\geq1$, and initial values $\left\{\bm{X}_p\in\mathbb{R}^{n_p\times k}:p=1,2\ldots,d\right\}$ such that $\prod\limits_{p=1}^d\langle\bm{X}_p(:,i),\bm{X}_p(:,j)\rangle=\left\{\begin{array}{cc}
    1, & i=j, \\
    0, & i\neq j.
\end{array}\right.$
\ENSURE The $k$ largest elements of $\tensor{A}$ and their corresponding locations: $\bm{val}\in\mathbb{R}^k$ and $\bm{ind}\in\mathbb{Z}_+^{d\times k}$.
\STATE $\bm{val},\bm{ind}\leftarrow\left[\ \right]$
\WHILE{not convergent}
\STATE Pick $s$ variables $\left\{\bm{X}_{p_1},\ldots,\bm{X}_{p_s}\right\}$ to be updated 
\STATE Compute these $k$ intermediate $s$th-order CP tensors
$$\left\{\sum\limits_{r=1}^R\alpha_{r,j}^t\bm{U}_{p_1}(:,r)\circ\cdots\circ\bm{U}_{p_s}(:,r):j=1,\ldots,k\right\}$$
\FOR{$j=1$ to $k$}
\STATE $\bm{val}_{j},\bm{ind}_{j}\ \leftarrow$ the largest element of the CP tensor $\sum\limits_{r=1}^R\alpha_{r,j}^t\bm{U}_{p_1}(:,r)\circ\cdots\circ\bm{U}_{p_s}(:,r)$ and its location
\FOR{$i=1$ to $j-1$}
\STATE $\beta_{r,i,j}^t\ \leftarrow\ \prod\limits_{q\notin\{p_1,\ldots,p_s\}}\langle\bm{X}_{q}^t(:,i),\bm{X}_{q}^t(:,j)\rangle$
\IF{$\beta_{r,i,j}^t \neq 0$ and $\bm{ind}_i=\bm{ind}_j$}
\STATE $\bm{val}_{j},\bm{ind}_{j}\ \leftarrow$ the largest element of $\sum\limits_{r=1}^R\alpha_{r,j}^t\bm{U}_{p_1}(:,r)\circ\cdots\circ\bm{U}_{p_s}(:,r)$ that is not greater than $\bm{val}_j$ and its location
\ELSE
\STATE Continue
\ENDIF
\ENDFOR
\ENDFOR
\STATE $\bm{val}\leftarrow\bm{val}\cup\bm{val}_j,\ \bm{ind}\leftarrow\bm{ind}\cup\bm{ind}_j$
\ENDWHILE
\end{algorithmic}
\end{algorithm}


Compared to the original optimization problem \eqref{prob:symeigen-r1}, the size of the subproblem \eqref{prob:subpro} is reduced to $\sum\limits_{q\in\{p_1,\ldots,p_s\}}n_qk$, where the block parameter $s$ is preset by users. However, the constraints make this subproblem difficult to solve exactly. This is due to that determining constraint validity based on $\beta_{r,i,j}^t$ causes the time cost to grow exponentially with $k$, making this approach impractical for medium or large values of $k$. To overcome this, we propose a heuristic method for the subproblem \eqref{prob:subpro}.


\begin{figure*}[t]
\centering
\subfigure[]{
\includegraphics[width=0.45\linewidth]{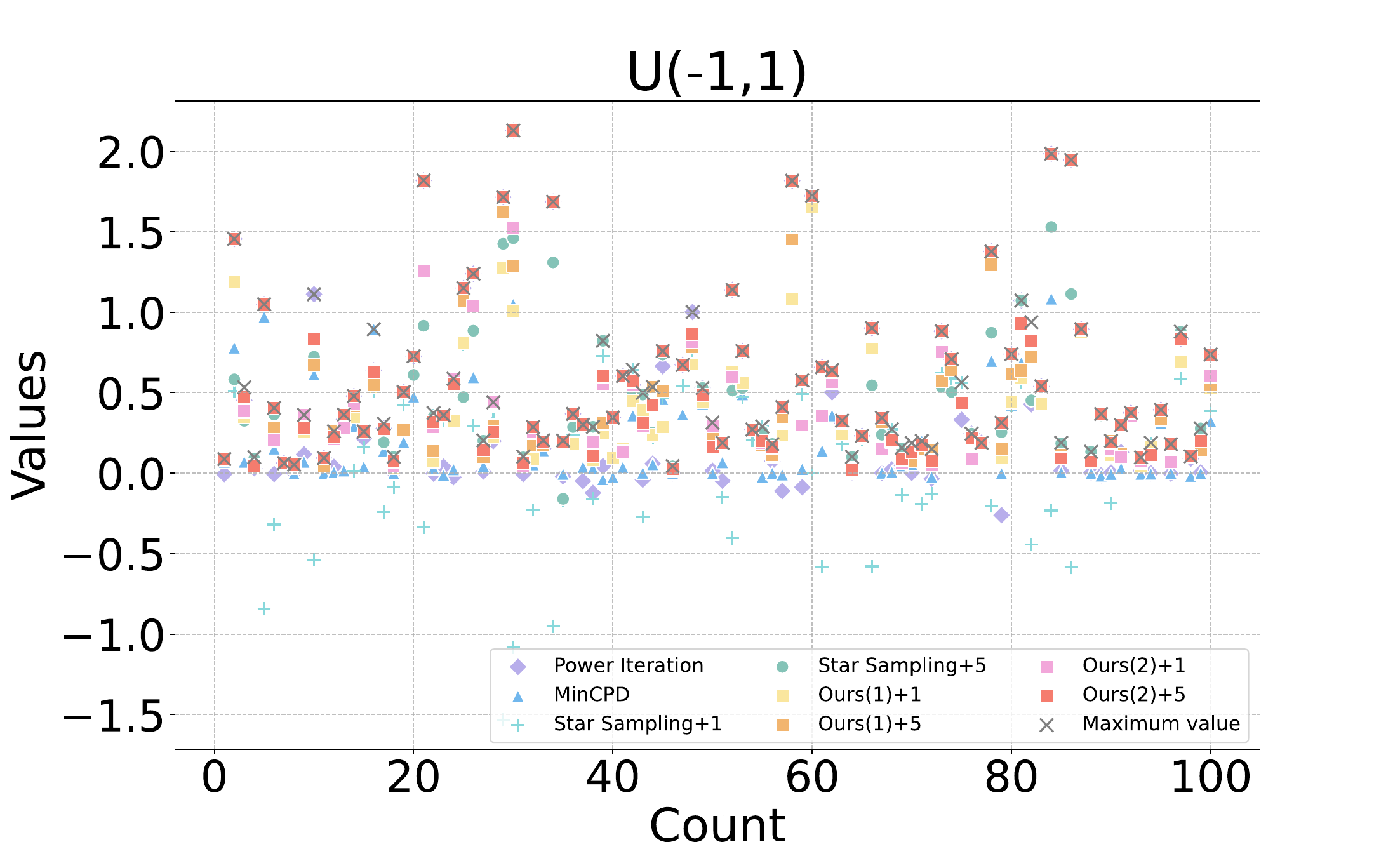}}
\hspace{0.05\linewidth}
\subfigure[]{
\includegraphics[width=0.45\linewidth]{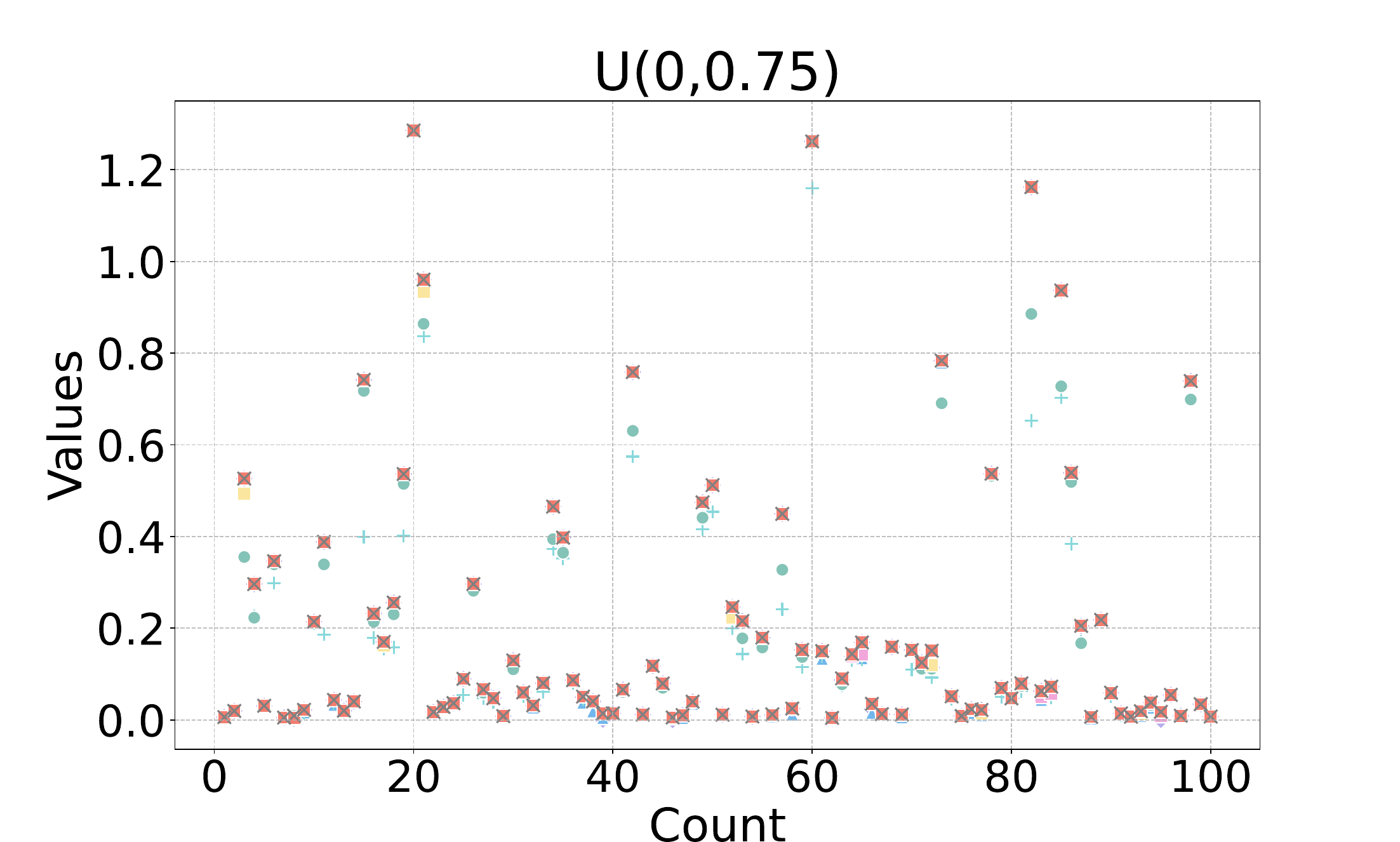}}
\vfill
\subfigure[]{
\includegraphics[width=0.45\linewidth]{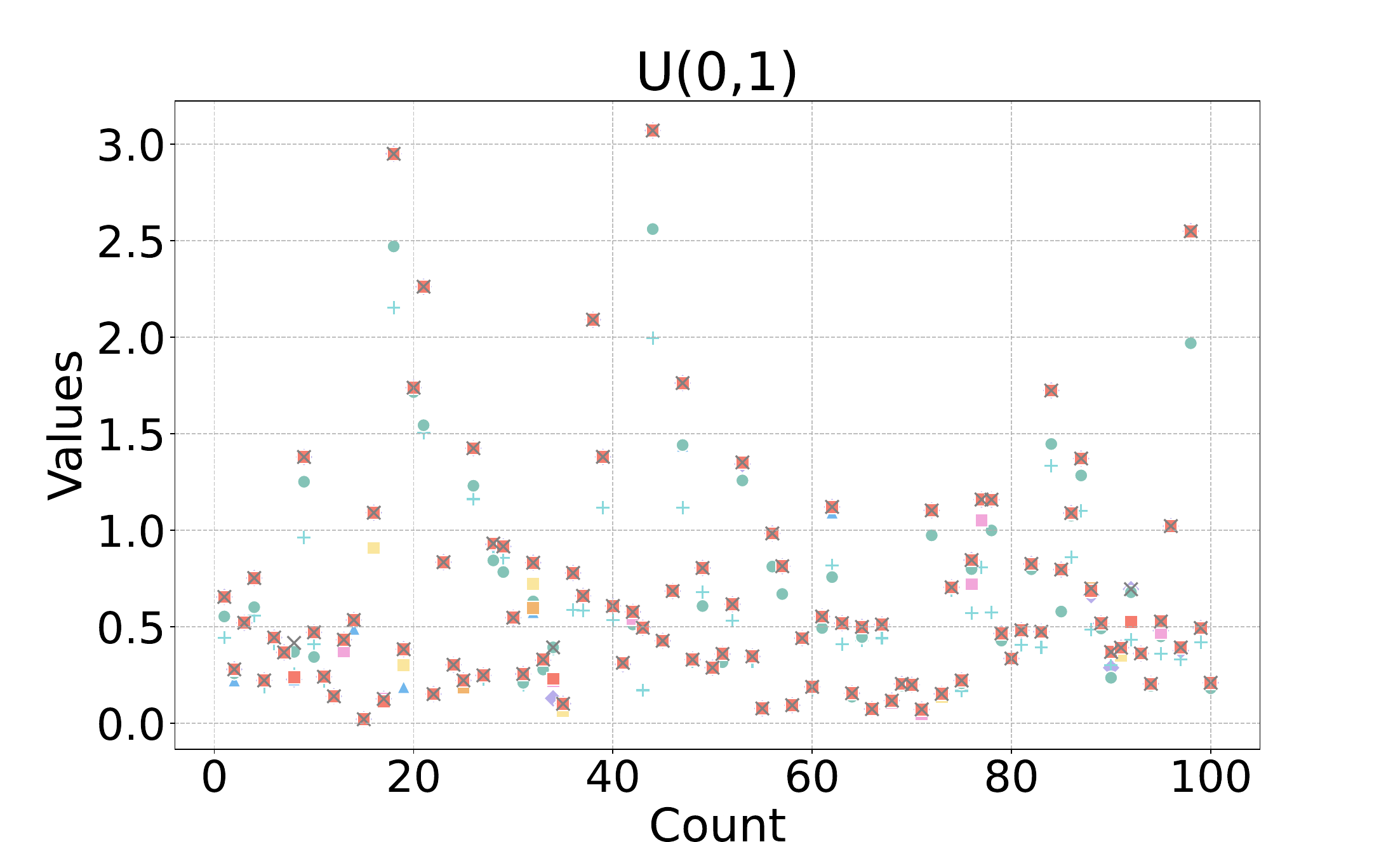}}
\hspace{0.05\linewidth}
\subfigure[]{
\includegraphics[width=0.45\linewidth]{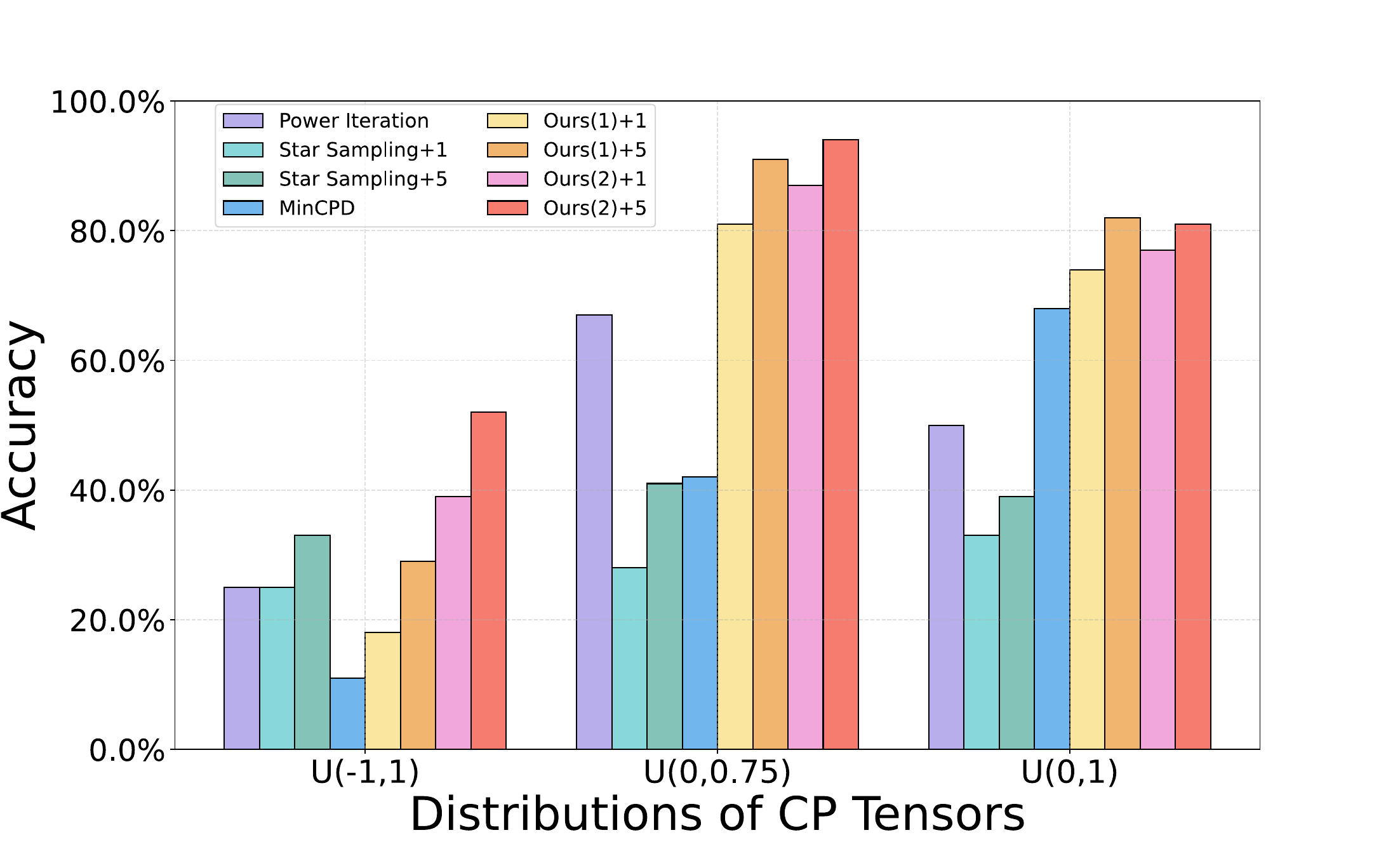}}
\caption{The largest value and accuracy obtained by baselines and our algorithm for random CP tensors with different uniform distributions.}
\label{fig1}
\end{figure*}


The heuristic method is inspired by the structure of the objective function of the subproblem \eqref{prob:subpro}, which is a separable summation representation with respect to columns of $\{\bm{X}_{p_1},\ldots,\bm{X}_{p_s}\}$, i.e.,
\begin{equation}
    f_1(\bm{X}_{p_1}(:,1),\ldots,\bm{X}_{p_s}(:,1))+\cdots+f_k(\bm{X}_{p_1}(:,k),\ldots,\bm{X}_{p_s}(:,k)),
\end{equation}
where 
\begin{equation}
    f_j(\bm{X}_{p_1}(:,j),\ldots,\bm{X}_{p_s}(:,j)) = \sum\limits_{r=1}^R\alpha_{r,j}^t\prod\limits_{q\in\{p_1,\ldots,p_s\}}\langle\bm{X}_{q}(:,j),\bm{U}_{q}(:,r)*\bm{X}_{q}(:,j)\rangle.
\end{equation}
With the separable summation representation, we can identify $\{(\bm{X}_{p_1}(:,j),\ldots,\bm{X}_{p_s}(:,j)):j=1,\ldots,k\}$ in a certain order such that they meet local optimality. Specifically, taking the order $1\rightarrow2\rightarrow\cdots\rightarrow k$ as an example, $\{(\bm{X}_{p_1}(:,j),\ldots,\bm{X}_{p_s}(:,j)):j=1,\ldots,k\}$ can be determined as follows. For $j=1$,
\begin{equation}
        (\bm{X}_{p_1}^*(:,1),\ldots,\bm{X}_{p_s}^*(:,1))=\arg\max\limits_{\bm{X}_{p_1}(:,1),\ldots,\bm{X}_{p_s}(:,1)} f_1(\bm{X}_{p_1}(:,1),\ldots,\bm{X}_{p_s}(:,1)),
\end{equation}
which is equivalent to retrieving the largest element and its location of the $s$th-order CP tensor $\sum\limits_{r=1}^R\alpha_{r,j}^t\bm{U}_{p_1}(:,r)\circ\cdots\circ\bm{U}_{p_s}(:,r)$.
For $j>1$,
\begin{equation}
(\bm{X}_{p_1}^*(:,j),\ldots,\bm{X}_{p_s}^*(:,j)) =\\ \arg\max\limits_{\bm{X}_{p_1}(:,j),\ldots,\bm{X}_{p_s}(:,j)} f_j(\bm{X}_{p_1}(:,j),\ldots,\bm{X}_{p_s}(:,j))
\end{equation}
\begin{equation}
    \text{s.t.}\ \beta_{r,i,j}^t\prod\limits_{q\in\{p_1,\ldots,p_s\}}\langle\bm{X}_{q}(:,i),\bm{X}_q(:,j)\rangle=0\ \text{for all}\ i<j,
\end{equation}
which includes two cases: (1) if $\beta_{r,i,j}^t=0$, indicating an invalid constraint, the problem is equivalent to retrieving the largest element and its location of the $s$th-order CP tensor $\sum\limits_{r=1}^R\alpha_{r,j}^t\bm{U}_{p_1}(:,r)\circ\cdots\circ\bm{U}_{p_s}(:,r)$; (2) otherwise, we need to find the largest element and its location that meets the condition
\begin{equation}
    \prod\limits_{q\in\{p_1,\ldots,p_s\}}\langle\bm{X}_{q}(:,i),\bm{X}_q(:,j)\rangle=0\quad\text{for all}\quad i<j.
\end{equation} 
Combined with the heuristic method, we summarize the overall procedure of the proposed block-alternating iterative algorithm in Algorithm \ref{algo:dmrg}. 

\begin{rmk}\label{rmk:blockpara}
    The accuracy and efficiency of the block-alternating iterative algorithm are sensitive to the block parameter $s$. A larger $s$ allows the block-alternating algorithm to explore a wider solution space, potentially improving accuracy at the cost of increased computational and memory overhead.
    In practice, setting $s$ to $2$ or $3$ is usually sufficient to achieve acceptable accuracy. A more adaptive strategy for setting $s$ is based on the dimensions of the input CP tensor, which ensures a consistent subproblem scale during the iterative process.
\end{rmk}

\begin{rmk}\label{rmk:subproblem}
    The proposed block-alternating iterative algorithm can be enhanced by integrating with the existing methods to retrieve the largest/smallest element from the $s$th-order CP tensor in the subproblem (line 10), which may improve its efficiency. 
\end{rmk}

\begin{rmk}\label{rmk:function}
    It is worth mentioning that the proposed block-alternating iterative algorithm can not only retrieve the $k$ largest elements and its locations, but also obtain the $k$ largest elements of the real (imaginary) part of complex CP tensors. Specifically, it can be implemented by changing the largest element in line 10 of Algorithm~\ref{algo:dmrg} to the largest element of the real (imaginary) part.
\end{rmk}

\begin{figure*}[t]
\centering
\subfigure[]{
\includegraphics[width=0.45\linewidth]{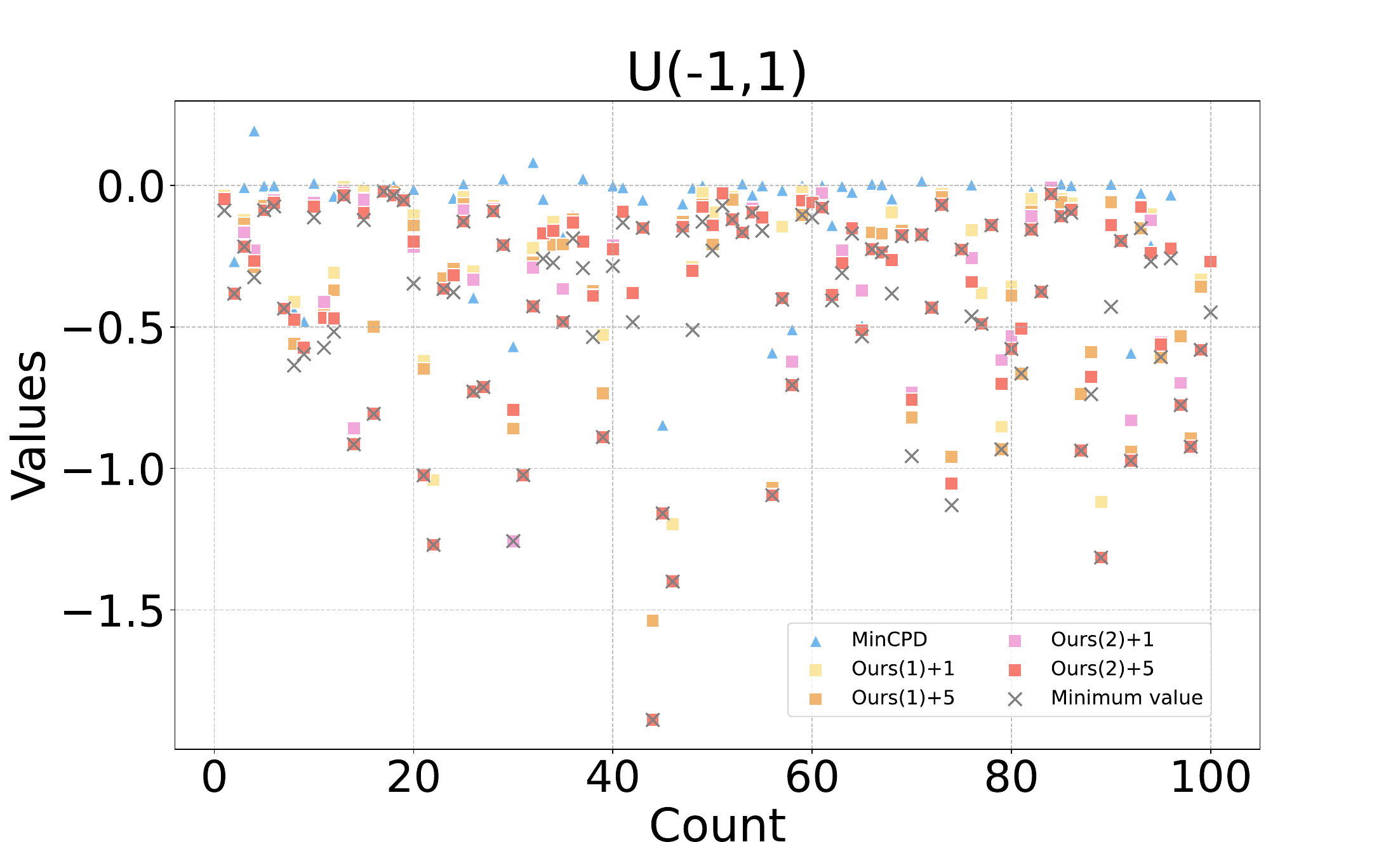}}
\hspace{0.05\linewidth}
\subfigure[]{
\includegraphics[width=0.45\linewidth]{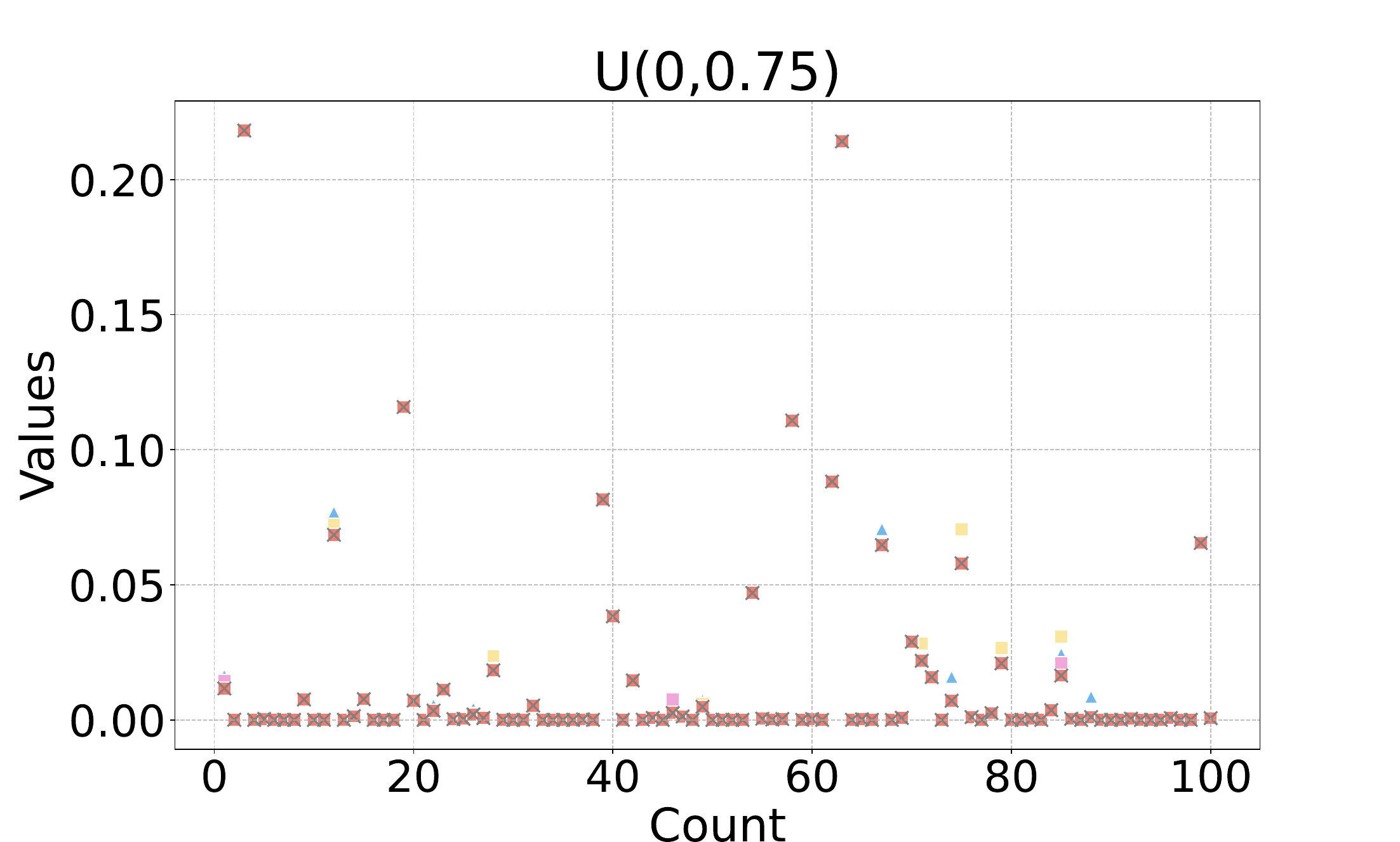}}
\vfill
\subfigure[]{
\includegraphics[width=0.45\linewidth]{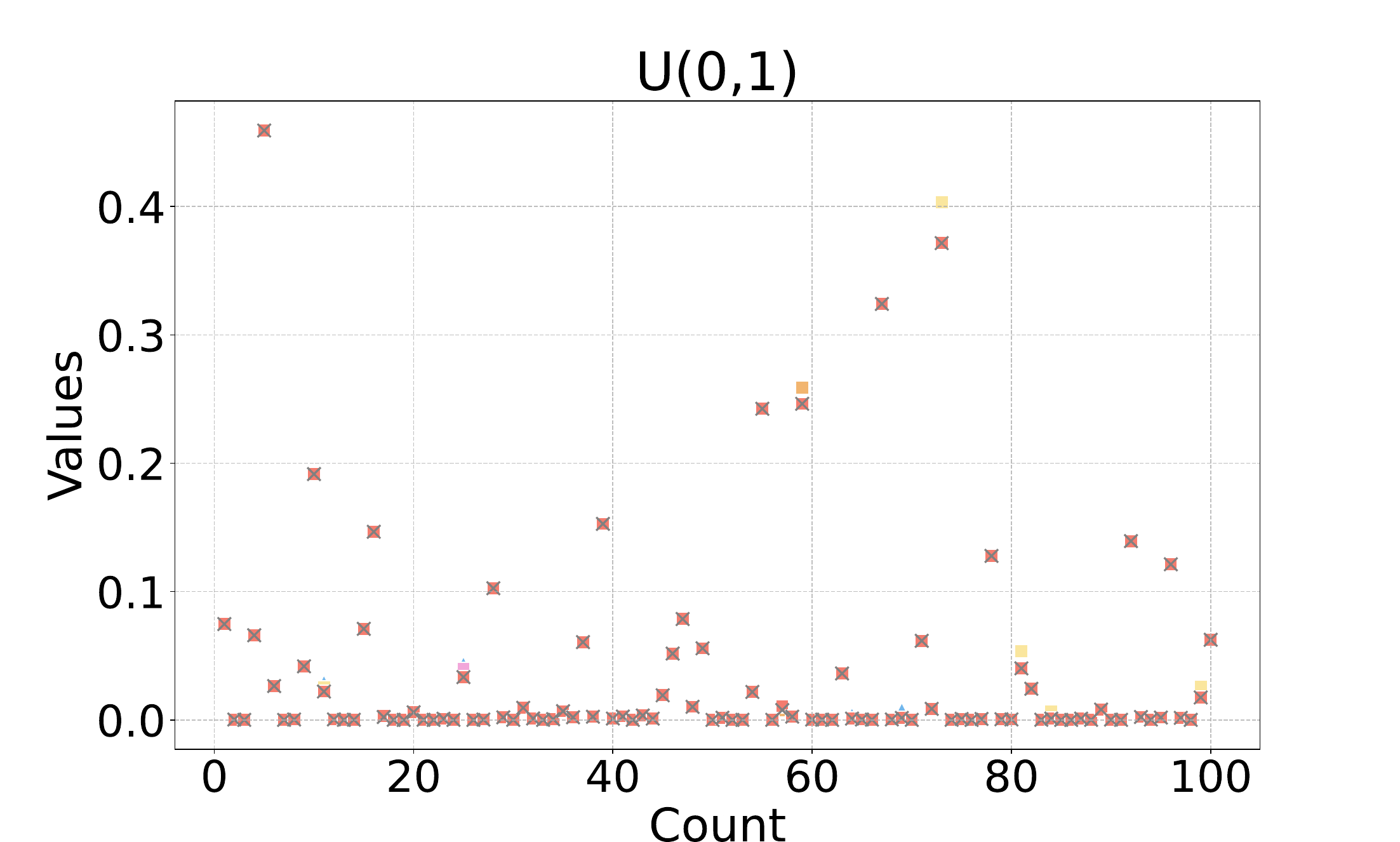}}
\hspace{0.05\linewidth}
\subfigure[]{
\includegraphics[width=0.45\linewidth]{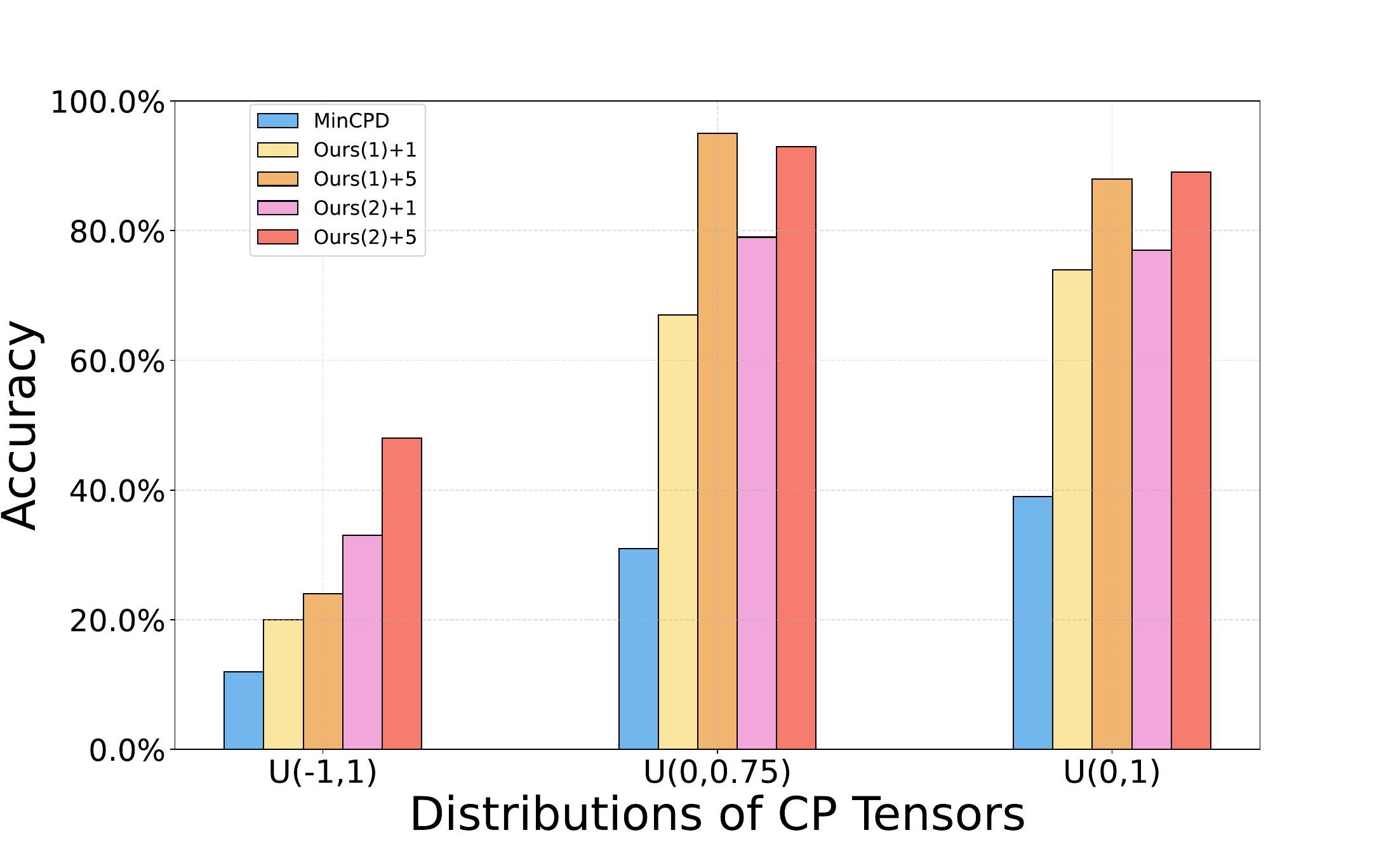}}
\caption{The smallest value and accuracy obtained by baselines and our algorithm for random CP tensors with different uniform distributions.}
\label{fig2}
\end{figure*}


\subsection{Complexity analysis}\label{sec:cost}

We next analyze the time and memory costs of the block-alternating iterative algorithm, which mainly includes two parts: (1) computing the $k$ intermediate $s$th-order CP tensors (line 4); and (2) solving the subproblem \eqref{prob:subpro} by the proposed heuristic method (lines 5-15). The calculation in the first part is to compute the coefficients $\{\alpha_{r,j}^t:r=1,\ldots,R\}$, which involves the Hadamard and inner product operations on CP tensors. Thus, it requires a time cost of $\sum\limits_{q\notin\{p_1,\ldots,p_s\}}n_q$ but no additional memory cost. In the second part, we need to find the largest element and its location of the $s$th-order CP tensor $\sum\limits_{r=1}^R\alpha_{r,j}^t\bm{U}_{p_1}(:,r)\circ\cdots\circ\bm{U}_{p_s}(:,r)$ accurately, and determine whether it satisfies the constraints by $\beta_{r,i,j}^t$. The direct method is to restore the CP tensor to a full tensor and then apply classical sorting algorithms \cite{cormen2022introduction} to implement it. The required time and memory costs are $\prod\limits_{q\in\{p_1,\ldots,p_s\}}n_qR+\prod\limits_{q\in\{p_1,\ldots,p_s\}}n_q\log\left(\prod\limits_{q\in\{p_1,\ldots,p_s\}}n_q\right)$ and $\prod\limits_{q\in\{p_1,\ldots,p_s\}}n_q$, respectively. Additionally, the calculation of $\beta_{r,i,j}^t$ only involves the inner product of vectors. Thus, its time cost can be ignored.
\end{thm}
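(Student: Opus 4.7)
The plan is to derive the form of the subproblem by starting from the full optimization problem (\ref{prob:symeigen-r1}), substituting the fixed values $\{\bm{X}_{q'}^t : q' \notin \{p_1, \ldots, p_s\}\}$ for the non-updating blocks, and then isolating the parts that depend on the updating variables $\{\bm{X}_q : q \in \{p_1, \ldots, p_s\}\}$. The entire argument reduces to an algebraic identity obtained by distributing each $d$-fold product over the disjoint index sets $\{p_1,\ldots,p_s\}$ and its complement; the substitution of fixed values then collapses one of the two factors into a scalar coefficient.

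For the objective $\sum_{j,r} \prod_{p=1}^d \langle \bm{X}_p(:,j), \bm{U}_p(:,r) * \bm{X}_p(:,j)\rangle$, each $d$-fold product factors as a product over the fixed indices $q'$ times a product over the updating indices $q$. Since the former is constant during this iteration (its entries are all evaluated at $\bm{X}_{q'}^t$), it can be absorbed into the scalar $\alpha_{r,j}^t$, yielding exactly the objective in (\ref{prob:subpro}). An analogous factorization applied to the orthonormality constraint $\prod_{p=1}^d \langle \bm{X}_p(:,i), \bm{X}_p(:,j)\rangle = \delta_{ij}$ produces the fixed scalar $\beta_{r,i,j}^t$ (the $r$-index is vacuous here but retained for notational symmetry with the objective) multiplying the product over the updating blocks. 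The unit-norm constraints decouple cleanly by block: those with $q' \notin \{p_1, \ldots, p_s\}$ already hold at $\bm{X}_{q'}^t$, while those with $q \in \{p_1, \ldots, p_s\}$ remain as stated.

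The main obstacle I anticipate is bookkeeping rather than substantive mathematics: one must carefully track the degenerate case $\beta_{r,i,j}^t = 0$, in which the orthogonality equation for the pair $(i,j)$ is trivially satisfied regardless of the updating variables and therefore becomes inactive as a constraint on the subproblem. This subtlety does not affect the algebraic form of (\ref{prob:subpro}) itself, but it is precisely what dictates the branching logic in the subsequent heuristic solver, so it is worth flagging explicitly in the statement via the factor $\beta_{r,i,j}^t$ sitting in front of the product rather than being divided out.
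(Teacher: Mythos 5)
Your proposal is correct and follows essentially the same route as the paper's own proof: factor each $d$-fold product over the disjoint index sets $\{p_1,\ldots,p_s\}$ and its complement, evaluate the complementary factor at the fixed iterates $\bm{X}_{q'}^t$, and absorb it into the scalars $\alpha_{r,j}^t$ and $\beta_{r,i,j}^t$. Your added observations --- that the $r$-index on $\beta_{r,i,j}^t$ is vacuous and that the case $\beta_{r,i,j}^t=0$ deactivates the corresponding orthogonality constraint --- are accurate and, if anything, make the statement's bookkeeping more explicit than the paper's version.
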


\section{Numerical experiments}\label{sec:numexp}

To demonstrate the effectiveness of the proposed block-alternating iterative algorithm, we conduct a series of numerical experiments using synthetic and real-world CP tensors in this section. The proposed algorithm is compared with several baselines, including power iteration \cite{espig2020iterative}, star sampling \cite{lu2017sampling}, and MinCPD via Frank-Wolfe \cite{sidiropoulos2023minimizing}. 
During the iterative process of power iteration, if the CP rank of the tensor corresponding to the eigenvector exceeds 10, a recompression operation is introduced to suppress rank growth. To ensure the power iteration finds the largest element (not just the largest in magnitude), we apply a shift transformation to make the input tensor non-negative. The transformation is $\tensor{A}+s\tensor{E}$, where $s > 0$ is a sufficiently large scalar (e.g., $\|\tensor{A}\|_F$) and $\tensor{E}$ is a tensor whose entries are all one. Moreover, we need to determine the location of the largest element by computing the best rank-one approximation of the tensor corresponding to the eigenvector in the power iteration. In the star sampling method, the number of nodes and samples are set to $2$ and $\min(10^4, \lfloor20\%\times\#P(\tensor{A})\rfloor)$, following the guidelines in \cite{lu2017sampling}, where $\#P(\tensor{A})$ denotes the total number of elements in $\tensor{A}$. Since the star sampling and block-alternating iterative algorithms are suitable for the case of $k>1$, a simple trick to improve their accuracy involves expanding the search space by increasing $k$ to $k+K$. For convenience, we denote these two methods as $\text{XX}+K$ and evaluate them with $K=1$ and $K=5$.
All experiments in this paper are conducted on a laptop. The tensor operations for the tested algorithms are implemented by the \textit{TensorLy} package \cite{kossaifi2019tensorly} with \textit{NumPy} as the backend.

\begin{figure*}
\centering
\subfigure[]{
\includegraphics[width=0.45\linewidth]{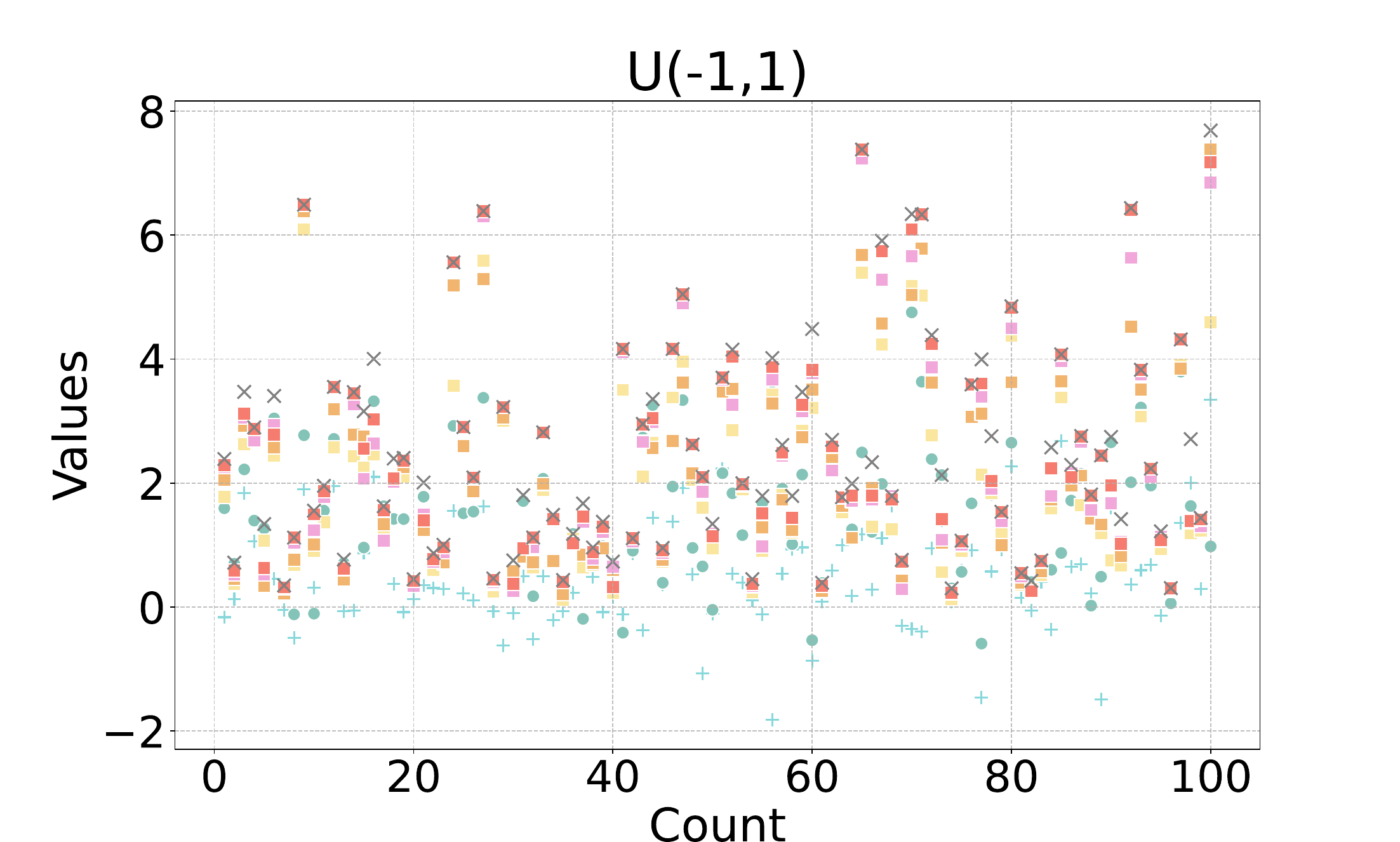}}
\hspace{0.05\linewidth}
\subfigure[]{
\includegraphics[width=0.45\linewidth]{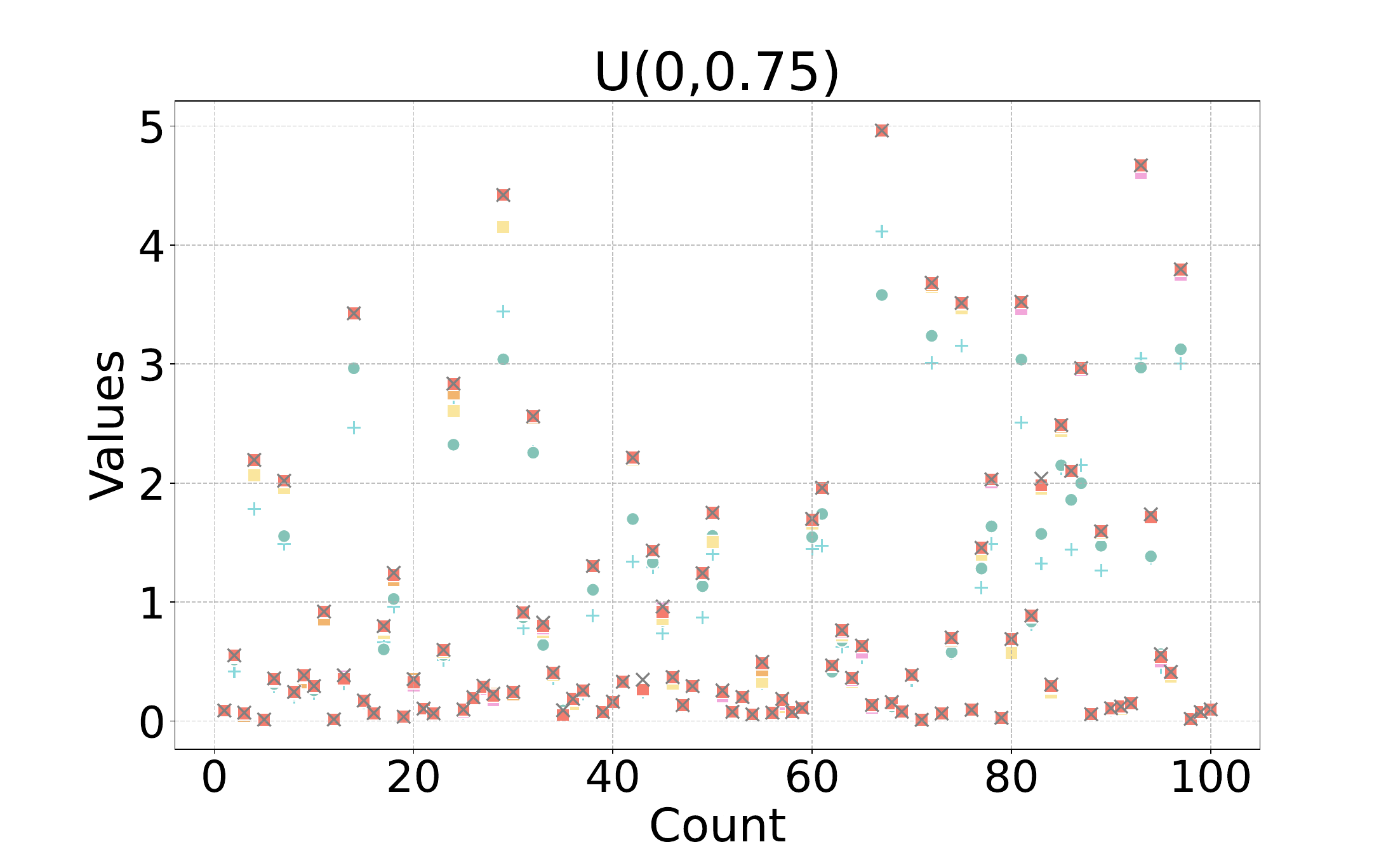}}
\vfill
\subfigure[]{
\includegraphics[width=0.45\linewidth]{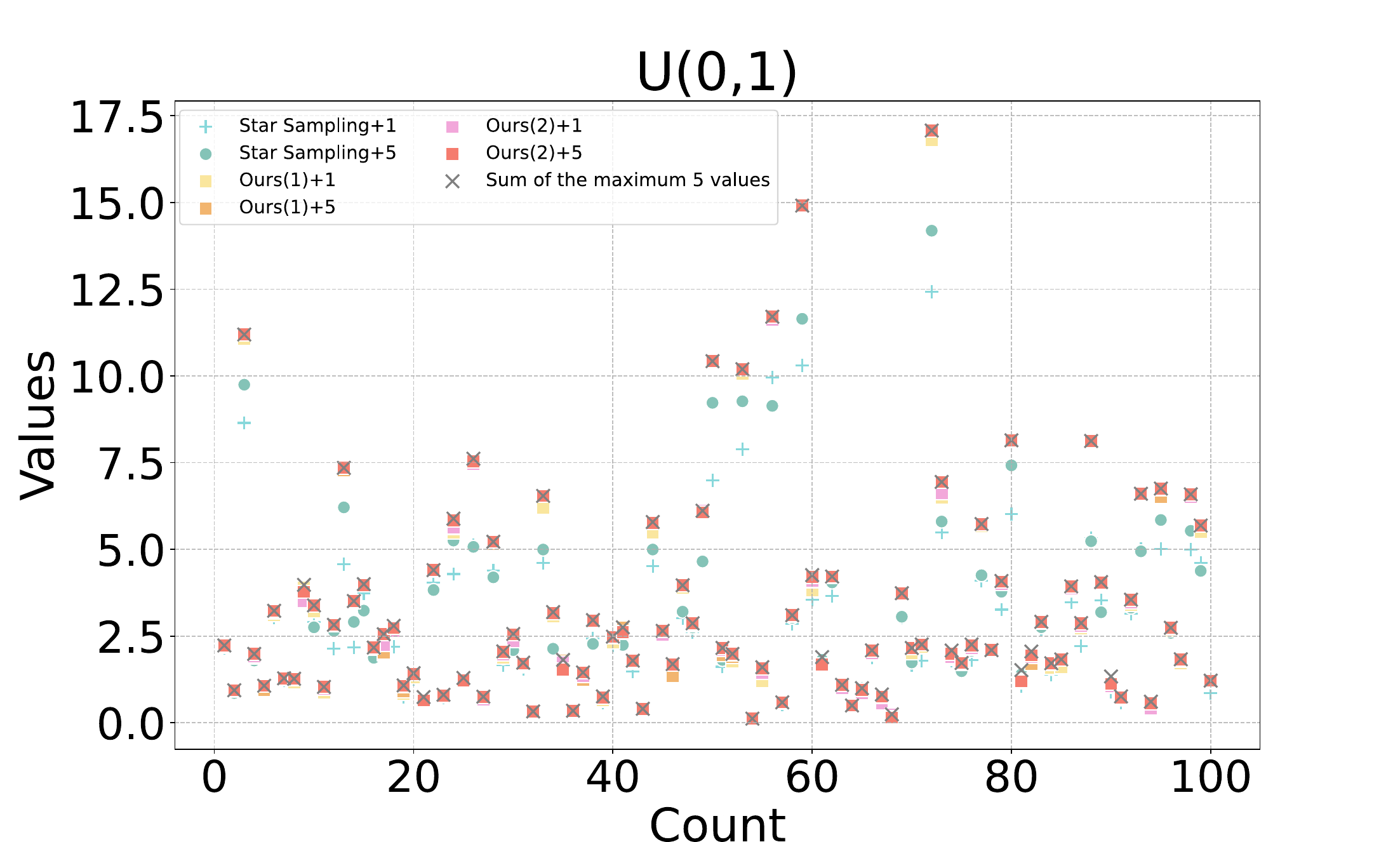}}
\hspace{0.05\linewidth}
\subfigure[]{
\includegraphics[width=0.45\linewidth]{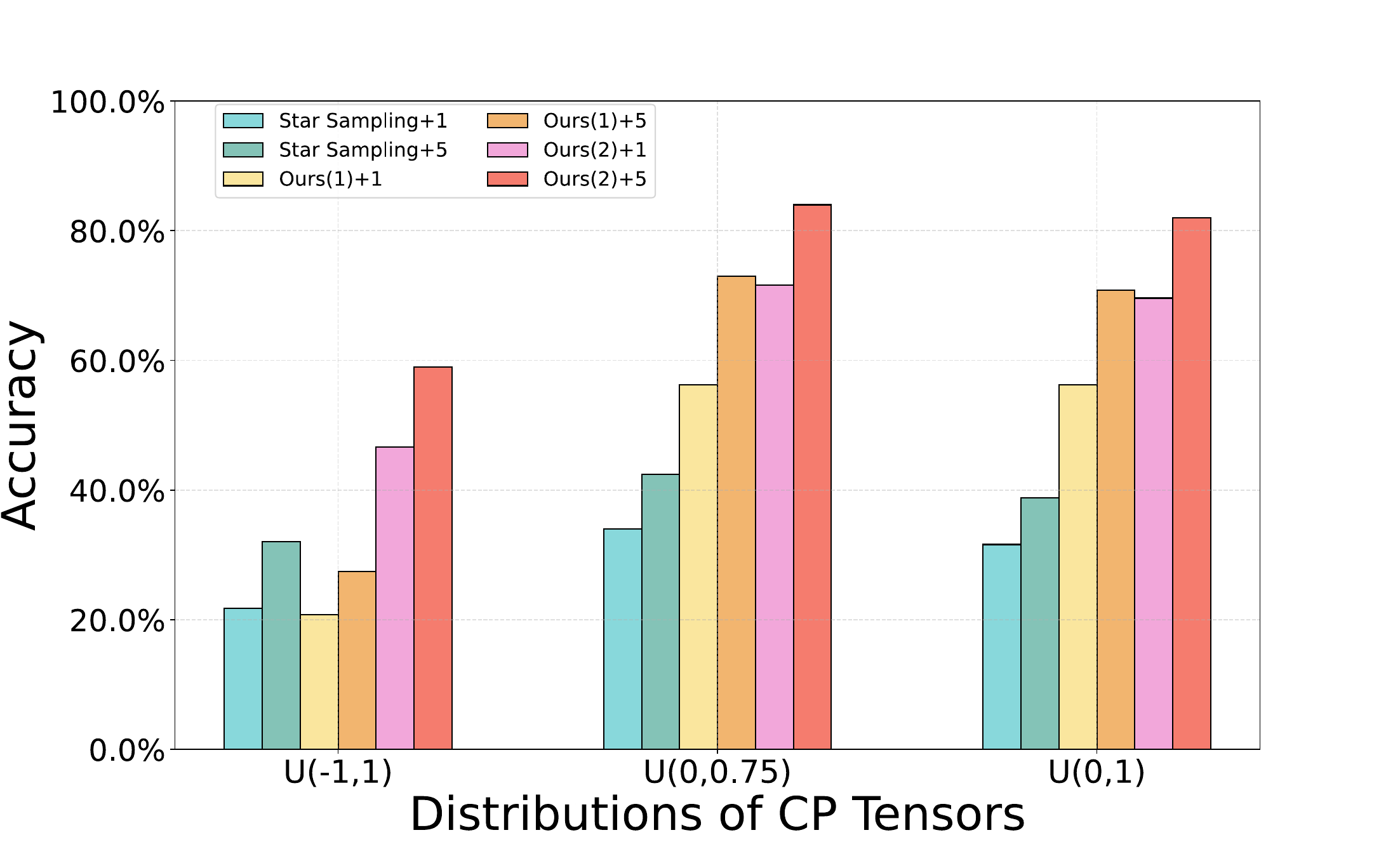}}
\caption{The sum of the 5 largest values and accuracy obtained by baselines and our algorithm for random CP tensors with different uniform distributions.}
\label{fig3}
\end{figure*}


\subsection{Tests on random CP tensors}

In the first example, we compare the proposed block-alternating iterative algorithm with baselines on randomly generated CP tensors.
The accuracy is defined as $\frac{\#\text{hit}}{S}$, where $\#\text{hit}$ is the number of successful identifications of the largest/smallest elements, and $S=100$ is the number of random CP tensors.
Specifically, the input CP tensor is generated as follows: The CP factors $\left\{\bm{U}_{p}\in\mathbb{R}^{n_p\times R}:p=1,2,\ldots,d\right\}$ of each tensor are randomly generated following uniform distributions, including $U(-1,1)$, $U(0,0.75)$, and $U(0,1)$. Moreover, the order $d$, dimensions $\left\{n_p: p=1,2,\ldots,d\right\}$, and CP rank $R$ are randomly chosen integers between  $\left[3,10\right]$, $\left[2,15-d\right]$ and $\left[2,10\right]$, respectively.


{\emph{Case of k = 1:}} 
Fig.~\ref{fig1} depicts the largest elements and accuracies of all tested algorithms for random CP tensors with different uniform distributions. 
From Fig.~\ref{fig1}(d), we observe that the proposed algorithm outperforms the baselines in terms of accuracy. Specifically, compared to power iteration, star sampling, and MinCPD, it achieves improvements of $16.0\%\sim108.0\%$, $16.0\%\sim235.7\%$, and $13.2\%\sim372.7\%$, respectively. Although the performance of all tested algorithms is sensitive to data distribution, our algorithm remains more reliable, e.g., its accuracy on the $U(-1,1)$ distribution can still exceed $50\%$. Additionally, Fig.~\ref{fig1}(a) and (d) show that the value obtained by star sampling is often far from the exact one, and the MinCPD method is sensitive to the numerical scale of data. In contrast, our algorithm exhibits neither of these issues, demonstrating its superiority in stability.

Since the star sampling and the power iteration methods can not apply to the smallest element retrieval problem, the comparative analysis of Fig.~\ref{fig2} is confined to MinCPD and our algorithm. As shown in Fig.~\ref{fig2}(d), the MinCPD method performs worst across all uniform distributions, with an accuracy of no more than $40\%$, while the accuracy of our algorithm achieves $48.0\%\sim93.0\%$. Simultaneously, Fig.~\ref{fig2}(a) shows that the value obtained by our algorithm is generally smaller than MinCPD, which implies that it is closer to the exact value. 

\begin{figure*}
\centering
\subfigure[]{
\includegraphics[width=0.45\linewidth]{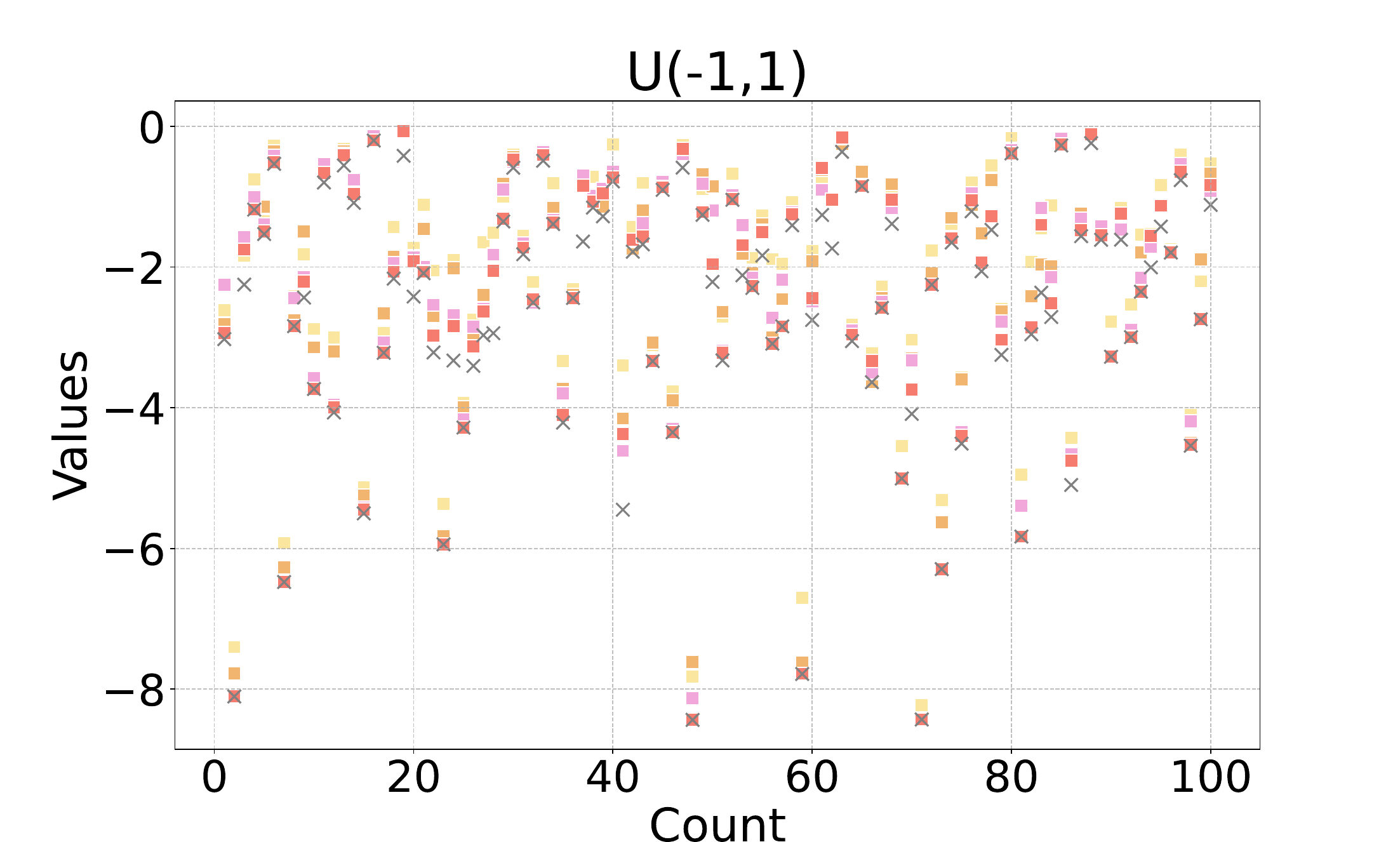}}
\hspace{0.05\linewidth}
\subfigure[]{
\includegraphics[width=0.45\linewidth]{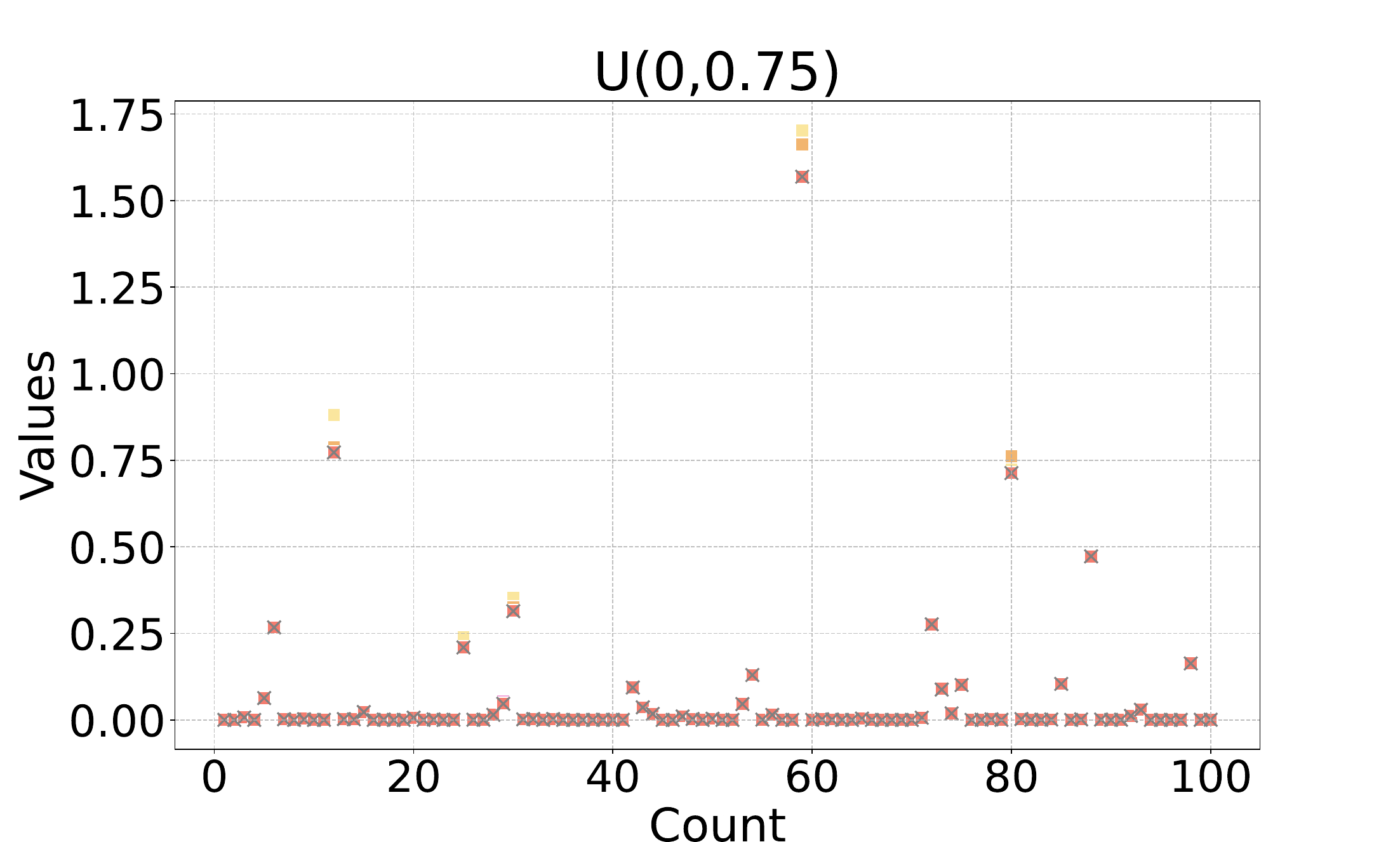}}
\vfill
\subfigure[]{
\includegraphics[width=0.45\linewidth]{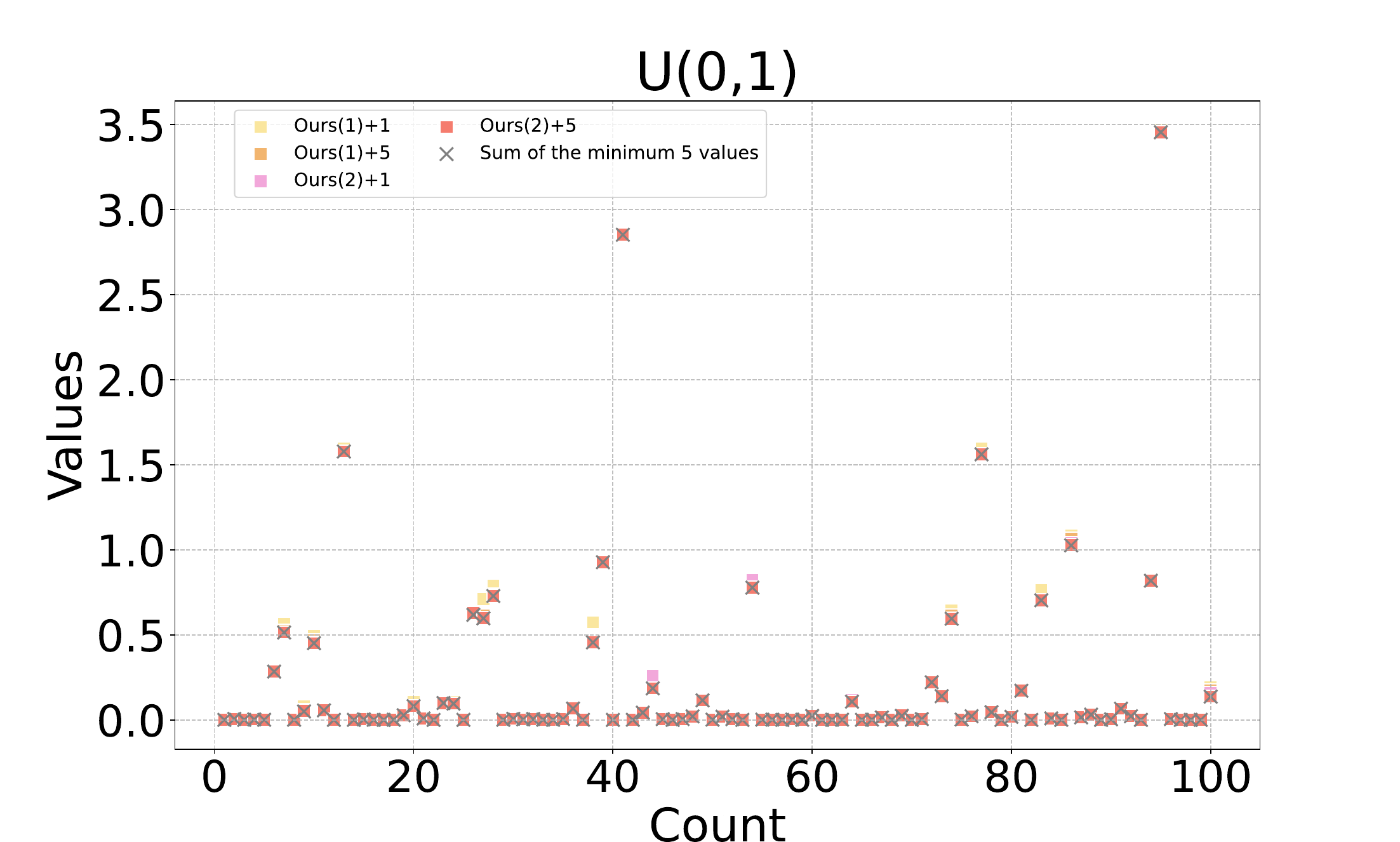}}
\hspace{0.05\linewidth}
\subfigure[]{
\includegraphics[width=0.45\linewidth]{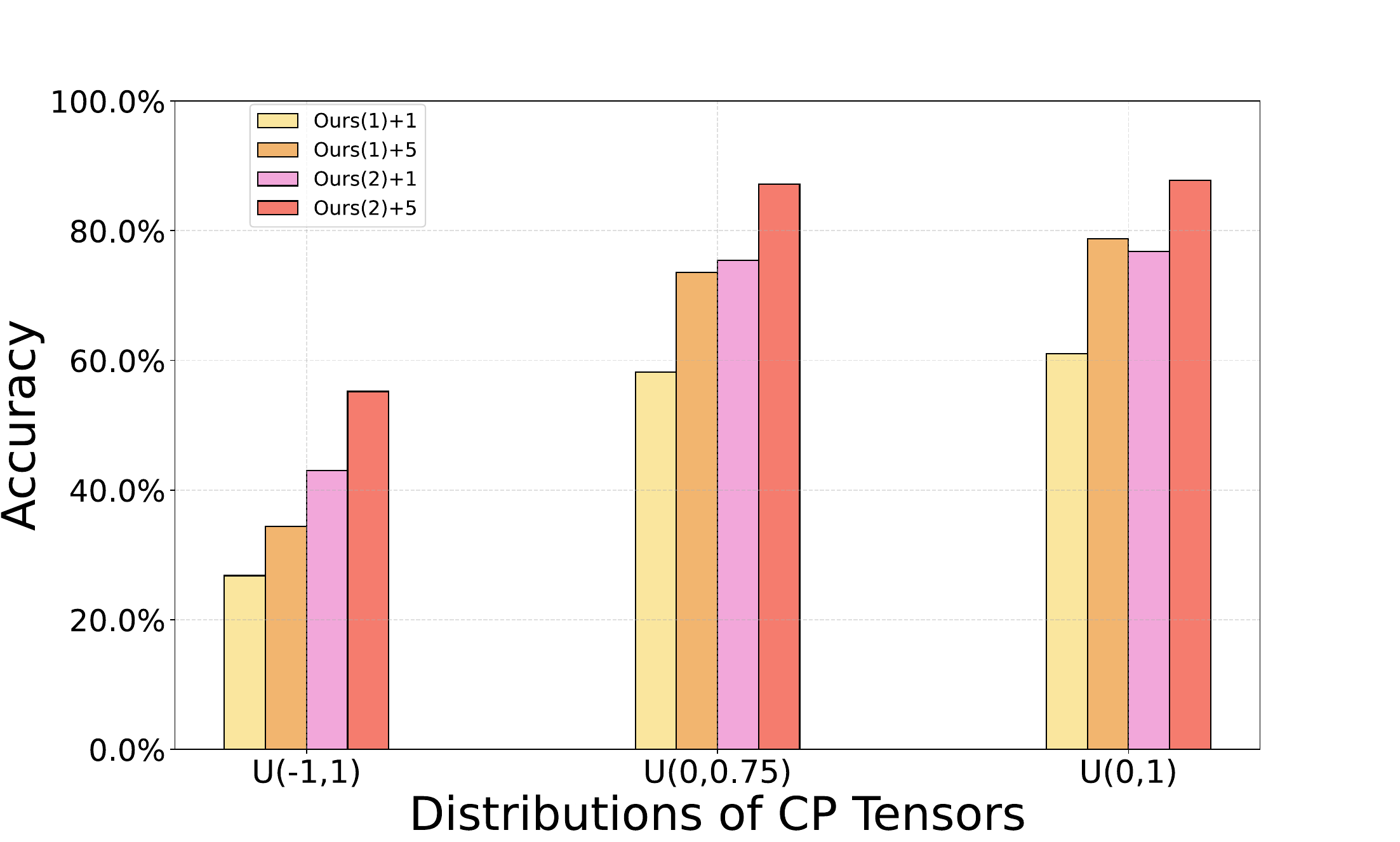}}
\caption{The sum of the 5 smallest values and accuracy obtained by our algorithm for random CP tensors with different uniform distributions.}
\label{fig4}
\end{figure*}


{\emph{Case of k = 5:}} Among all tested algorithms, only the star sampling algorithm and our algorithm are applicable to the $k$ ($k>1$) largest elements retrieval problem. Therefore, Fig.~\ref{fig3} only demonstrates their performance on random CP tensors with different uniform distributions, including the sum of the $5$ largest elements and accuracy. For the $U(-1,1)$ distribution, Fig.~\ref{fig3}(a) shows that the sum of values obtained by star sampling will be smaller than~$0$, indicating excessive deviation from the exact value. Indeed, Fig.~\ref{fig3}(d) illustrates that the star sampling method has a poor performance in terms of accuracy, which remains below $45.0\%$ across all uniform distributions. On the other hand, our algorithm has a higher accuracy. Specifically, for $U(-1,1)$, $U(0,0.75)$, and $U(0,1)$ distributions, its accuracy can reach $59.0\%$, $84.0\%$, and $82.0\%$, respectively. Moreover, our algorithm is also effective at retrieving the $5$ smallest elements. As shown in Fig.~\ref{fig4}(d), it achieves the accuracy of $55.2\%$, $87.2\%$, and $87.8\%$ across these three uniform distributions. This further confirms the advantages of our algorithm in both versatility and reliability.

Notably, the parameters $s$ and $K$ in our algorithm are used to expand the search space for retrieving the top-$k$ elements. The numerical results in this experiment illustrate that adjusting these two parameters significantly improves the accuracy of our algorithm in different scenarios. Particularly, for the $U(-1,1)$ distribution, our algorithm with $s=2$ and $K=5$ (i.e., Ours(2)+5) can improve accuracy by $21.0\%\sim 188.9\%$, which greatly enhances its reliability.

\subsection{Tests on CP tensors from multivariate functions}

In the second example, we examine the performance of the proposed block-alternating iterative algorithm in retrieving the smallest element of CP tensors from two multivariate functions, which include Griewank and Schwefel\tablefootnote{Retrieved from \url{http://www-optima.amp.i.kyoto-u.ac.jp/member/student/hedar/Hedar_files/TestGO.htm}}, i.e., 
\begin{equation}\label{eq:test_function}
\begin{array}{l}
      f(\bm{z}) = \sum\limits_{p=1}^d\frac{z_p^2}{4000}-\prod\limits_{p=1}^d\cos(\frac{z_p}{\sqrt{p}})+1,\ z_{p}\in[-600,600],\\
      f(\bm{z})=418.9829d - \sum\limits_{p=1}^{d}z_{p}\sin(\sqrt{|z_{p}|}),\  z_{p}\in[-500,500],
\end{array}
\end{equation}
where $d$ is the dimension. In this example, we set the dimension $d$ to $10$. Meanwhile, the grid size of each dimension is a randomly chosen integer between $[2, n]$, where $n$ is taken as $\{128,256,512,1024\}$. Due to the separability of Griewank and Schwefel functions, the CP tensor can be directly derived from Eq.~\eqref{eq:test_function}. 

For each $n$, we generate 100 CP tensors and record the average of the smallest values obtained by the MinCPD algorithm and the proposed algorithms in Table~\ref{table:ex2}. 
As illustrated in Table~\ref{table:ex2}, our algorithm can more accurately identify the smallest element of CP tensors from the Griewank and Schwedel functions. Specifically, for CP tensors from the Griewank and Schwedel functions, the average of values obtained by our algorithm is $0.04\sim14.08$ and $33.22\sim295.43$ smaller than that of MinCPD under different grid sizes. Simultaneously, Fig.~\ref{fig5}(a) shows that the values obtained by the MinCPD method have more outliers, indicating its instability. Additionally, similar to the first experiment, the numerical results in this experiment also demonstrate that our algorithm, with a larger block parameter $s$, can retrieve a more accurate value.

\begin{table*}[t]\label{table:time}
	\normalsize
	\renewcommand\arraystretch{1.35}
	\setlength{\tabcolsep}{12.0pt}
	\begin{center}
		\caption{\normalsize The average of the smallest values obtained by MinCPD and our algorithm for CP tensors from Griewank and Schwefel functions.}\label{table:ex2}
		\scalebox{0.85}{\begin{tabular}{c|c|c|c|c|c}
			\hline\hline
		\multicolumn{2}{c|}{Algorithms} & 128 & 256  & 512 & 1024 \\
  \hline\hline
\multirow{3}{*}{Griewank} & MinCPD & 22.8719 & {2.1701} &  2.9565 & 1.8190  \\
& Ours($1$) & {8.8038} & 2.1828  &  {2.9493}  & {1.8020}   \\
 & Ours($2$) & {8.7888} & {2.1280}&  {2.8561} & {1.6819} \\
 \hline\hline
 \multirow{3}{*}{Schwefel} & MinCPD & 507.4376 &242.5032  &  75.6010 &  178.0408 \\
 & Ours($1$) & {212.0115} & {102.6022} & {42.3849}  & {36.2540}  \\
 & Ours($2$) & {212.0115} & {102.6022} & {42.3849} & {36.2540}  \\
			\hline\hline
		\end{tabular}}
	\end{center}
\end{table*}


\begin{figure*}
\centering
\subfigure[]{
\includegraphics[width=0.45\linewidth]{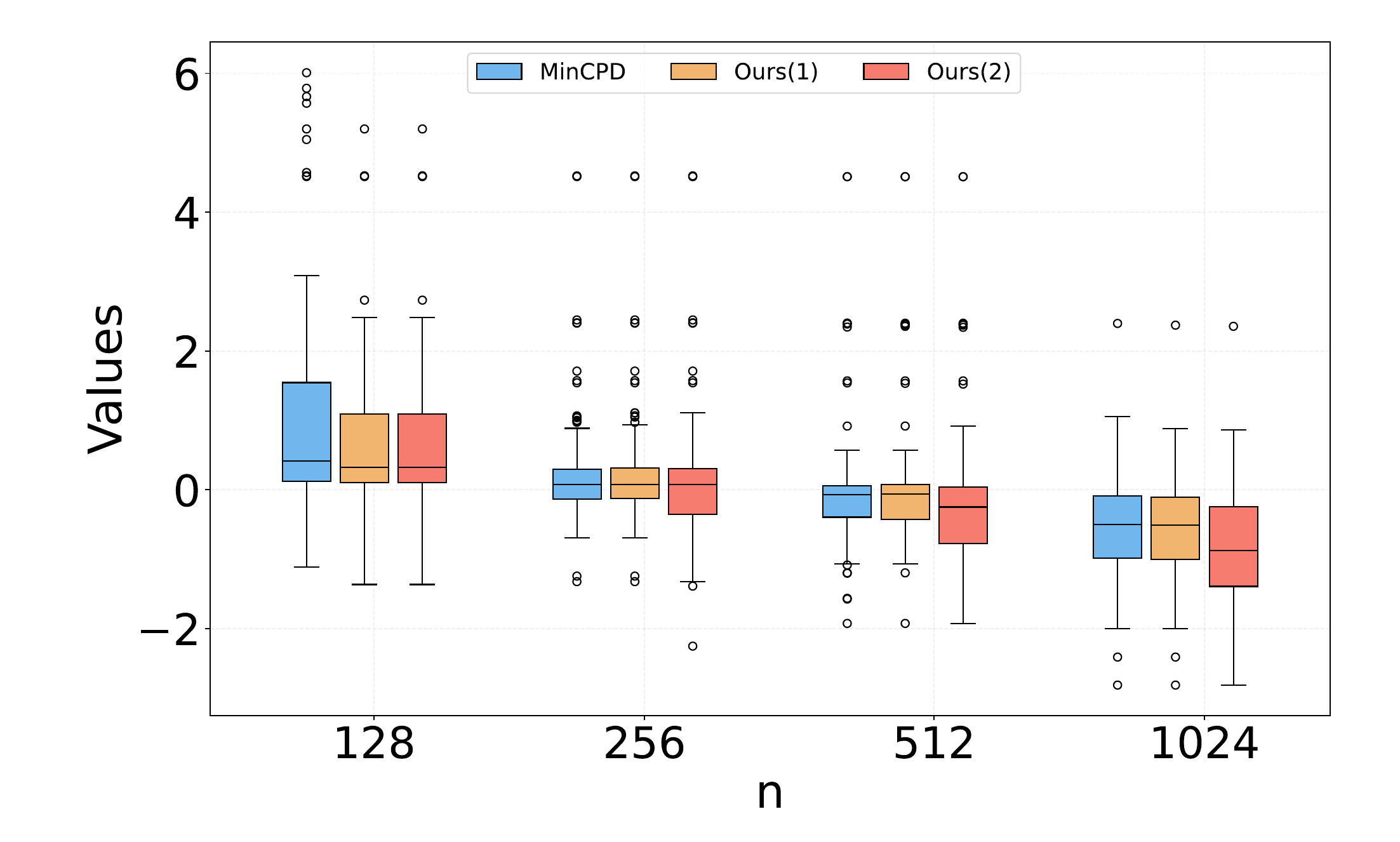}}
\hspace{0.05\linewidth}
\subfigure[]{
\includegraphics[width=0.45\linewidth]{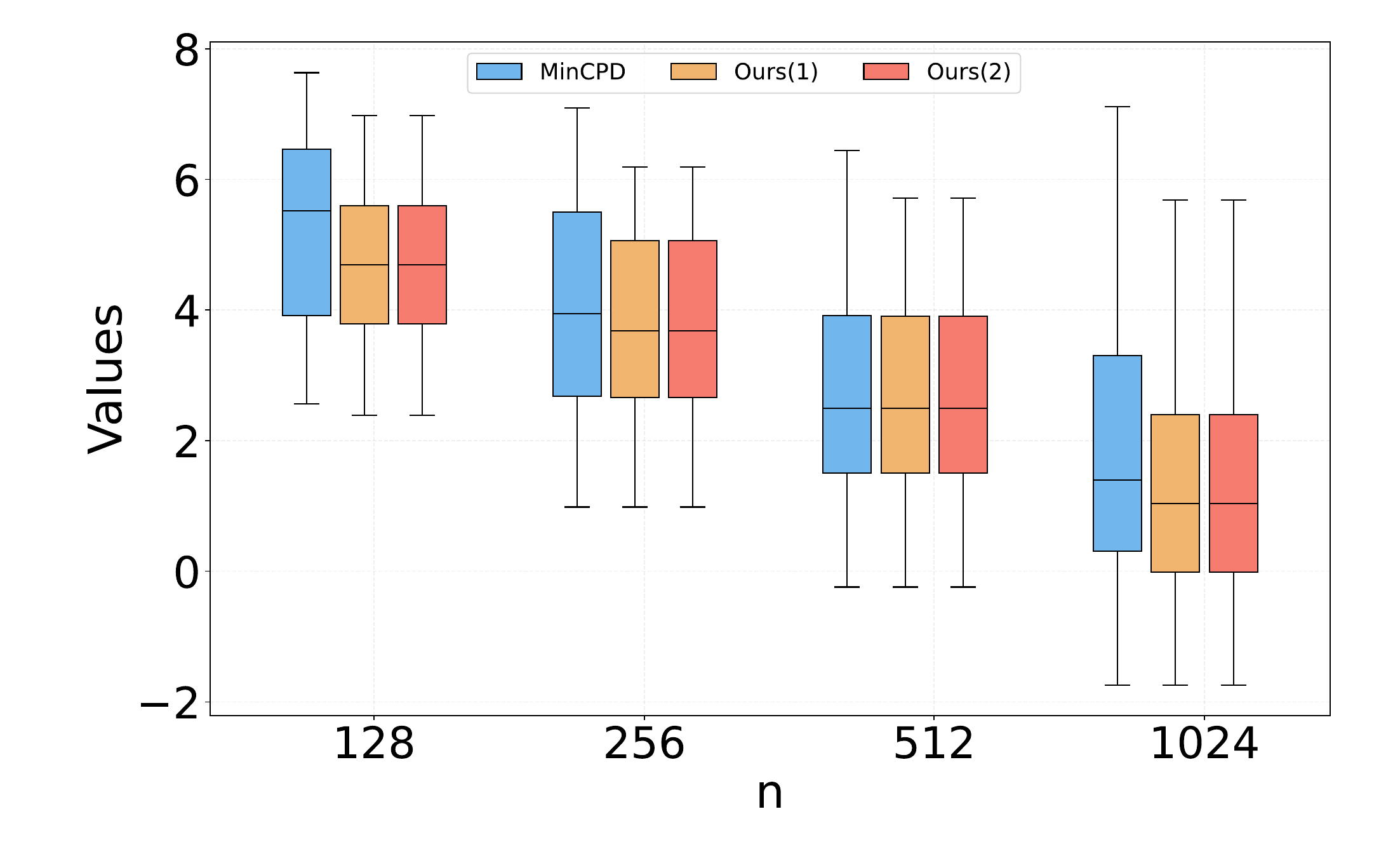}}
\caption{The smallest value obtained by MinCPD and our algorithms for CP tensors from (a) Griewank and (b) Schwefel functions.}
\label{fig5}
\end{figure*}


\subsection{Tests on quantum circuit simulations}

The goal of quantum circuit simulations is to calculate the distribution of quantum devices equipped with the circuit on a classical computer, i.e., the amplitudes of the bitstrings of the final quantum state from a quantum circuit mapping an initial quantum state. This is of great significance in the era of noisy intermediate-scale quantum (NISQ). On the one hand, it denotes what is the boundary of quantum supremacy. Simultaneously, it provides guidance for quantum computers, which is valuable for the development of quantum correction and the verification of quantum systems \cite{preskill2018quantum}. 

This example focuses on the quantum Fourier transform (QFT) circuit \cite{coppersmith2002an}, as shown in Fig.~\ref{quantum}. We present a subspace-CP tensor model to represent the quantum state that lies in the full Hilbert space $(\mathbb{C}^{2})^{\otimes d}$. Unlike the naive CP tensor model proposed in \cite{ma2022low}, we rearrange the quantum state as a $p$th-order tensor $\hat{\tensor{A}}\in\mathbb{C}^{2^q\times2^q\times\cdots\times2^q}$ (i.e., $d=pq$) instead of $d$th-order tensor, and use the CP format to represent it. The quantum circuit simulation consists of two parts: the quantum gate applying process and the measurement process. For the first part, the quantum gates can be implemented by tensor-times-matrix (TTM) operations on CP tensors \cite{kolda2009tensor}. The second part is to obtain the standard quantum state corresponding to the largest amplitude, which is equivalent to the top-$k$ largest elements retrieval problem considered in this paper. To compare the proposed methods with baselines, we take the number of qubits $d$ as $l^2$ with $l\in\{3,4,5,6,7\}$, and set $p=q=l$. The initial quantum state is set to a randomly generated rank-one tensor, where each element of each CP factor is a complex number with both real and imaginary parts drawn from the $U(0,1)$ distribution.
Additionally, we take $k$ as~$1$ and~$5$, respectively. The average of the $k$ largest elements obtained by all tested algorithms is shown in Table~\ref{table:qft}, where ``--'' indicates that the corresponding tested method does not support this feature. Fig.~\ref{fig:fft} displays the corresponding accuracy of these tested algorithms.

\begin{figure}[htbp]
\centering
\includegraphics[scale=0.28]{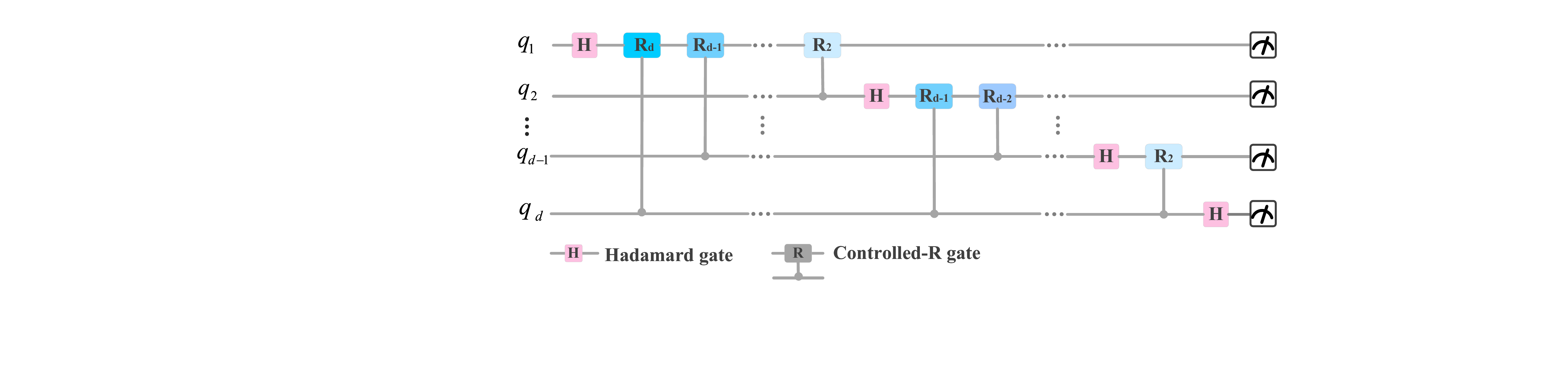}
\caption{The description of the quantum Fourier transformation circuit with $d$ qubits.}
\label{quantum}
\end{figure}


From Table~\ref{table:qft}, it can be seen that the value obtained by the power iteration method is far from the largest value, especially when the number of qubits is large. This is because the accuracy of the power iteration method is highly dependent on the data distribution. As the number of qubits increases, each element of the CP tensor from the quantum state gradually decreases, causing the error generated during iteration to become the dominant term and leading to the inaccuracy of the power iteration method. Compared to the baselines, our algorithm performs better on CP tensors from quantum states with different numbers of qubits. For the numbers of qubits of $9$, $16$, and $25$, our algorithm can accurately retrieve the largest element. As shown in Fig.~\ref{fig:fft}, its accuracy in retrieving the $5$ largest elements reaches $100.0\%$, $90.6\%$, and $90.2\%$, respectively. Due to limited memory, we are unable to verify the accuracy of our algorithm when the number of qubits is greater than $36$. However, Table~\ref{table:qft} shows that our algorithm yields larger values than baselines for both cases of $k=1$ and $k=5$, indicating greater accuracy. It is worth mentioning that the accuracy of baselines decreases with the increase of the number of qubits, whereas our algorithm does not exhibit this phenomenon. This illustrates that our algorithm is more stable and capable of handling larger-scale problems.

\begin{table}[htp]
	\scriptsize
	\renewcommand\arraystretch{1.35}
	\setlength{\tabcolsep}{5.5pt}
	\begin{center}
		\caption{\small The sum of the $k$ ($k=1$ and $5$) largest values obtained by baselines and our algorithm for CP tensors from QFT simulation.}\label{table:qft}
		\scalebox{0.85}{\begin{tabular}{c|c|c|c|c|c|c|c|c|c|c}
			\hline\hline
		\multicolumn{1}{c|}{Number of qubits} & \multicolumn{2}{c|}{9} & \multicolumn{2}{c|}{16} & \multicolumn{2}{c|}{25}  & \multicolumn{2}{c|}{36} & \multicolumn{2}{c}{49}  \\
  \hline\hline
  CP rank & \multicolumn{2}{c|}{8} & \multicolumn{2}{c|}{20} & \multicolumn{2}{c|}{56} & \multicolumn{2}{c|}{61} & \multicolumn{2}{c}{297}  \\
  \hline\hline
  Values & 0.0708 & 0.2777 & 5.8018e-3 & 0.0237 & 5.6542e-4 & 1.8245e-3 & \multicolumn{4}{c}{Out of memory} \\
  \hline\hline
  Power iteration & 1.0784e-5 & -- &9.3612e-5 & -- &6.1238e-7 & --& 1.5432e-10 &-- &1.1946e-12 & --\\
  Star sampling+1 & 0.0351 & 0.1477 & 5.8018e-3& 0.0236 & 5.6542e-4&1.7474e-3 &1.2589e-4 &5.2523e-4 & 5.3105e-8 &  2.2199e-7\\
  Star sampling+5 & 0.0351 & 0.1477 &5.8018e-3 & 0.0236 & 5.6542e-4& 1.7474e-3& 1.2589e-4 &5.2523e-4 & 5.3105e-8& 2.2199e-7 \\
  MinCPD & 0.0708 & -- &5.8018e-3 &-- & 1.5236e-4 &-- & 1.2693e-4 &-- & 8.0451e-8&-- \\
  Ours(1)+1 & 0.0708 & 0.2208 &5.8018e-3 & 0.0194&5.6542e-4 &1.6955e-3 & 1.2693e-4 & 4.7627e-4 &2.1279e-7 & 8.5845e-7 \\
  Ours(1)+5 &0.0708 & 0.2271 &5.8018e-3 & 0.0211&5.6542e-4 &1.7780e-3  &1.2693e-4  & 5.1209e-4 &2.1279e-7 & 9.9251e-7 \\
  Ours(2)+1 &0.0708 & 0.2271 &5.8018e-3 &0.0208&5.6542e-4 &1.7971e-3 &1.2693e-4  & 5.0967e-4 &2.1279e-7 & 9.7743e-7\\
  Ours(2)+5 &0.0708 & 0.2277 &5.8018e-3 &0.0230 &5.6542e-4 &1.8207e-3 &1.2693e-4  &5.2817e-4 &2.1279e-7 & 9.9732e-7 \\
  			\hline\hline
		\end{tabular}}
	\end{center}
\end{table}

\begin{figure}[t]
\centering
\vspace{0.1cm}
\setlength{\abovecaptionskip}{0.1cm}  
\setlength{\belowcaptionskip}{0.1cm}  
\includegraphics[width=0.45\textwidth]{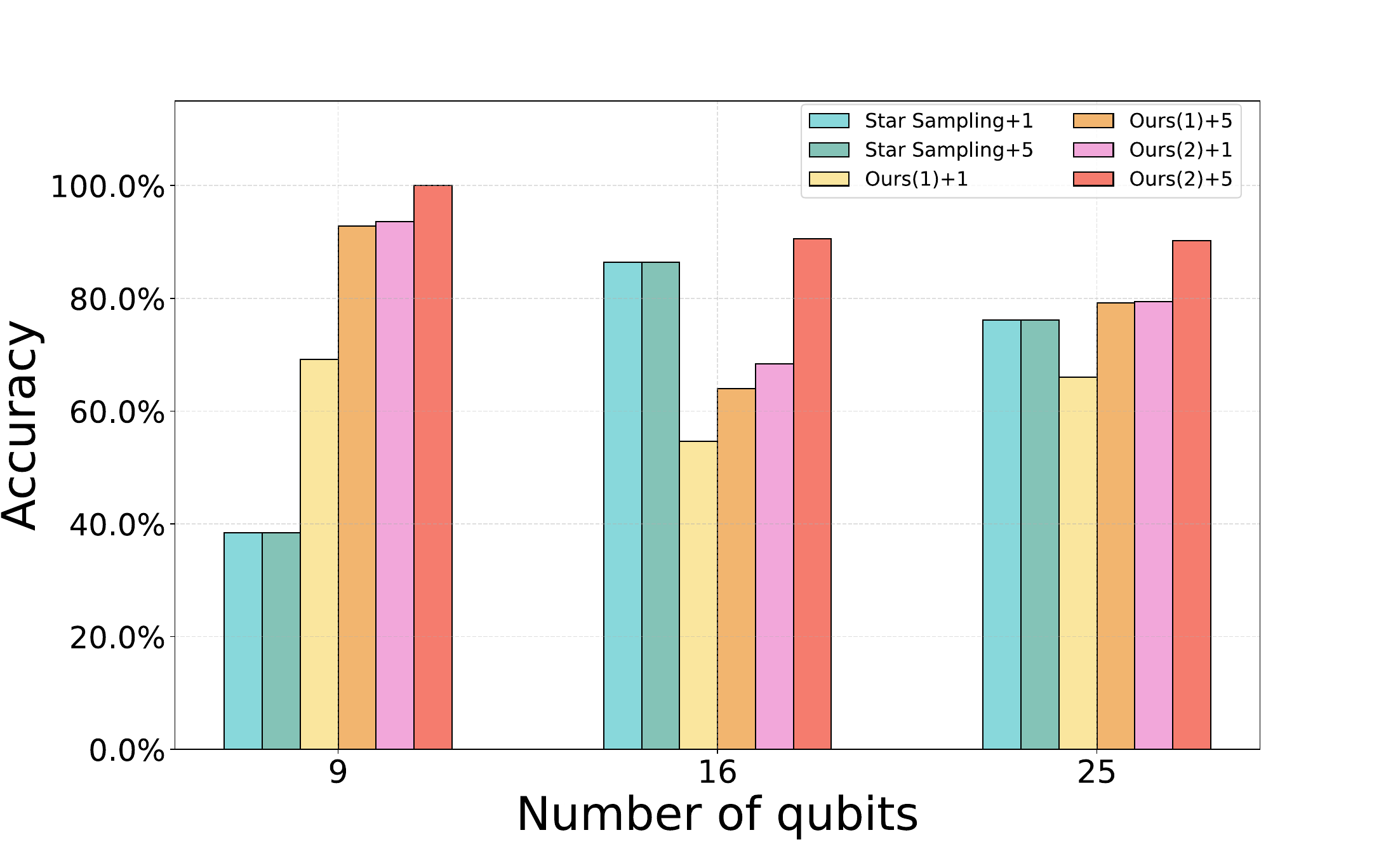}
\centering
\caption{The accuracy obtained by baselines and our algorithm for CP tensors from QFT simulation.}
\label{fig:fft}
\end{figure}

\section{Conclusion}\label{sec:con}

This paper proposes a block-alternating iterative algorithm for efficiently retrieving the top-$k$ elements in a factorized tensor.
Building upon the previously proposed symmetric eigenvalue model, 
we first model the top-$k$ elements retrieval problem into a continuous constrained optimization problem via the rank-one structure of the tensor corresponding to the eigenvector. Subsequently, we design a block-alternating iterative algorithm to solve the equivalent continuous constrained optimization problem. For small-scale subproblems in the iterative process, a heuristic method is proposed with the separable summation form of the objective function. Numerical experiments with synthetic and real-world tensors demonstrate the effectiveness of our proposed algorithm in various scenarios. Possible future work may include applying our proposed block-alternating iterative algorithm to other practical applications, such as recommendation system, network measurement, and computational biology.

\bibliography{iclr2024_conference}
\bibliographystyle{iclr2024_conference}



\end{document}